\def\multiset#1#2{\ensuremath{\left(\kern-.3em\left(\genfrac{}{}{0pt}{}{#1}{#2}\right)\kern-.3em\right)}}
\newcommand{\midarrow}{\tikz \draw[-triangle 90] (0,0) -- +(.1,0);}
 \newcommand{\m}{\mathfrak{m} }
  \newcommand{\Ass}{\operatorname{Ass}}
\newcommand{\rad}{\operatorname{rad}}
  \newcommand{\reg}{\operatorname{reg}}
\newcommand{\proset}{\,\mathrel{\lower 4pt\hbox{$\scriptscriptstyle/$}
\mkern -14mu\subseteq }\,} 
 \newtheorem{theorem}{Theorem}[section]
 \newtheorem{corollary}[theorem]{Corollary}
 \newtheorem{lemma}[theorem]{Lemma}
 \newtheorem{proposition}[theorem]{Proposition}
\newtheorem{notation}[theorem]{Notation}
 \theoremstyle{definition}
 \newtheorem{remark}[theorem]{Remark}
 \newtheorem{definition}[theorem]{Definition}
\title[Symbolic powers in weighted oriented graphs] {Symbolic powers  in weighted oriented graphs}
\author[ M. Mandal and D.K. Pradhan ]{Mousumi Mandal$^*$ and Dipak Kumar Pradhan }
\thanks{AMS Classification 2010: 05C22, 13D02, 13D45, 05C25, 05C38, 05E40}
\address{Department of Mathematics, Indian Institute of Technology Kharagpur, 721302, India} \email{mousumi@maths.iitkgp.ac.in}
\address{Department of Mathematics, Indian Institute of Technology Kharagpur, 721302, India}\email{dipakkumar@iitkgp.ac.in}
\begin{document}
\maketitle

\begin{abstract}
Let $D$ be a weighted oriented graph with the underlying graph $G$ when vertices with non-trivial weights  are sinks and $I(D), I(G) $ be the edge ideals corresponding to $D$ and $G,$ respectively. We give an explicit description of the symbolic powers of $I(D)$ using the concept of strong vertex covers.  We show that the ordinary and symbolic powers of $I(D)$ and $I(G)$ behave in a similar way. We provide a description for symbolic powers and Waldschmidt constant of $I(D)$ for certain classes of weighted oriented graphs. When $D$ is a weighted oriented odd cycle,  we compute $\reg (I(D)^{(s)}/I(D)^s)$ and prove $\reg I(D)^{(s)}\leq\reg I(D)^s$ and show that equality holds when there is only one  vertex with non-trivial weight.\\\\  

\noindent Keywords: Weighted oriented graph, edge ideal, symbolic power, Waldschmidt constant, Castelnuovo-Mumford
 regularity.
\end{abstract}

\section{Introduction}

A \textit{directed graph} or\textit{ digraph} $D=(V(D),E(D))$ consists of a finite set $V(D)$ of vertices together with a prescribed collection $E(D)$ of ordered pairs of distinct vertices called edges or arrows. If $ (u,v) \in E(D),$ then we call it  a  directed edge where the direction is from $ u $ to $ v $ and $ u $ (respectively $ v $) is called the initial vertex (respectively the terminal vertex). An oriented graph is a directed graph without multiple edges or loops. In other words, 
an oriented graph $D$ is a simple graph $G$ together with an orientation of its edges. 
An oriented graph $D$ is called vertex weighted oriented  if it is equipped with a weight function $w:V(D)\longrightarrow \mathbb N$. In fact, a vertex weighted oriented graph $D$ is a triplet $D=(V(D), E(D), w)$ where  $V(D)$ and  $E(D)$ are the vertex set and edge set,  respectively, and the weight of a vertex $x_i\in V(D)$ is $w(x_i)$  denoted by $w_i$ or $ w_{x_i}.$   We set $V^+(D)= \{x\in V(D)~ |~ w(x) \neq 1\}  $ and it is denoted by $V^+$.  If $V(D)=\{x_1,\ldots ,x_n\},$ we can regard each vertex $x_i$ as a variable and consider the polynomial ring $R=k[x_1,\ldots ,x_n]$ over a field $k$. 
 Then the edge ideal of $D$  is defined  as
$$I(D)=(x_ix_j^{w_j}~|~(x_i,x_j)\in E(D)) .$$ The underlying graph of $ D $ is the simple graph $ G $ whose
vertex set is $V(G) = V(D) $ and whose edge set is $E(G)= \{ \{u, v \}~|~(u, v) \in E(D)  \}   .$ The edge ideal of $ G $ is 
$I(G) = ( uv ~|~  \{u, v\} \in E(G) )  \subseteq R = k[x_1, \ldots , x_n] 	.$
If a vertex $x_i$ of $D$ is a source (i.e., has only arrows leaving $x_i$), we shall always assume $w_i = 1$ because in this case, the definition of $I(D)$ does not depend on the weight of
$x_i.$ If $w(x)=1$ for all $x\in V,$ then $I(D)$ recovers the usual edge ideal of the underlying graph $G$. Here, we shall always mean that a weighted oriented graph is the same as a vertex weighted oriented graph.  
The interest in edge ideals of weighted digraphs comes from coding theory, especially, in the study of Reed-Muller types codes. The edge ideal of a vertex weighted digraph appears as initial ideals of vanishing ideals of certain projective spaces over finite fields \cite{bernal}.
\vspace*{0.2cm}\\
Algebraic invariants and properties like Cohen-Macaulayness and unmixedness of edge ideals of weighted oriented graphs have been 
studied in \cite{H.T-2018} and \cite{pitones}. Recently in \cite{beyarsan} and \cite{zhu}, regularity and projective dimension of the edge ideals of some classes of weighted oriented graphs have been studied. In \cite{pitones}, Y. Pitones et al. studied some properties  of weighted oriented graphs when  $ V(D) $ is a strong vertex cover and the elements
 of   $ V^+ $ are sinks. They described the irreducible decomposition of $ I(D) $ for any weighted oriented graph $ D $ using the concept of strong vertex cover. In the geometrical context,  symbolic powers are important since they capture all the polynomials that vanish with a given multiplicity. For any homogeneous ideal $  I \subset R ,$   its $ m$-th symbolic power is defined   as $I^{(m)}=\displaystyle{\bigcap_{p\in \Ass I}(I^mR_p\cap R)}$. In \cite{zariski}, a classical result of Zariski
 and Samuel states that $ I^{(m)} = I^{m} $
 for all $ m \geq  1 $ if $ I $ is generated by a regular
 sequence, or equivalently, a complete intersection. Ideals that have the property $ I^{(m)} = I^{m} $
 for all $ m \geq  1 $ are called normally torsion free because their Rees
 algebra is normal. In particular, in \cite{gitler} Gitler et al. showed
 that a squarefree monomial ideal is normally torsion free if and only if the
 corresponding hypergraph satisfies the max-flow min-cut property. Symbolic powers of codimension
$  2 $ Cohen-Macaulay ideals have been studied recently in \cite{nagel}. 
 In the last decade the comparison
 between symbolic and regular powers of ideals is studied not only for
 graphs but also for ideal defining a set of points both in projective and
 multiprojective spaces \cite{bocci2016,guardo,harbourne}.  Recently symbolic powers and   invariants of the edge ideals of simple graphs  have been studied in \cite{bidwan,Y.GU,janssen}. But nothing is known about the symbolic powers of edge ideals of weighted oriented graphs.  
 \vspace*{0.2cm}\\    
In general, computing the symbolic powers of ideals is a difficult problem. Symbolic powers of square free monomial ideals have been studied via the concept of minimal vertex cover for edge ideals of graphs. Here we study the symbolic powers of some classes of non square free monomial ideals through the edge ideals  of weighted oriented graphs using the concept of strong vertex cover. We also compare the regularity of the symbolic  and ordinary powers of the edge ideals of weighted oriented odd cycles when the elements of $ V^+ $ are sinks.\\  
 In Section 2, we recall all the definitions and results which will be required for the rest of the paper. In Section 3, we provide a  relation between ordinary  and symbolic powers of edge ideals of  some  classes of weighted naturally oriented graphs in   Corollary \ref{unicycle}. In Theorem \ref{sym.lemma.2}, we show that the $ m$-th symbolic power of edge ideal of a weighted oriented graph, when the elements of $ V^+ $ are sinks, is the intersection of the  $m$-th powers of irreducible ideals associated to the strong vertex covers. In  Theorem \ref{sym.theorem.1}, we prove that the ordinary  and symbolic powers of edge ideal of a weighted  oriented graph behaves in the similar way as in the edge ideal of its underlying graph in case of elements of $ V^+ $ are sinks.    
 \vspace*{0.2cm}\\
 We also give an explicit description of symbolic powers and Waldschmidt constants of edge ideals of weighted oriented graphs when the elements of $ V^+ $ are sinks and underlying graphs are unicyclic graph with a unique odd cycle, complete graph and clique sum of two odd cycles of same length joining at a common vertex  (see Proposition \ref{uni-sym}, Proposition \ref{com-sym} and Proposition   \ref{clique-sym}, respectively). In Section 4, in Theorem \ref{maximal.2}, we provide an inequality between the regularity of ordinary  and symbolic powers of  edge ideal  of a weighted oriented odd cycle  when the elements of $ V^+ $ are sinks  and in Corollary \ref{maximal.5}, we prove the equality when  $ V^+ $ contains a single element.            
\section{Preliminaries}    
In this section,  we recall some definitions and results regarding a weighted oriented graph $ D   $ and its underlying graph $ G .$ 
A vertex cover of $ G $ is a subset $ V^{\prime} \subseteq V(G)  $ such that for all $e \in E(G), $  $ e \cap V^{\prime} \neq \phi.$ A minimal vertex
cover is a vertex cover which is minimal with respect to inclusion.\\
The next two lemmas describe the symbolic powers of the edge ideals in terms of minimal vertex covers.
\begin{lemma}\cite[Lemma 2.5]{bocci2016}\label{o.s.1}
	Let $G$ be a graph on vertices $\{x_1,\ldots,x_n\},$  $ I=I(G)\subseteq k[x_1,\ldots,x_n]$ be the edge ideal of $G$ and $V_1,\ldots,V_r$ be the minimal vertex covers of $G.$ Let $P_j$ be the monomial prime ideal  generated by the variables in $V_j.$ Then $$I=P_1 \cap\cdots\cap P_r$$ and $$I^{(m)}=P_1^{m} \cap\cdots\cap P_r^{m}.$$
\end{lemma}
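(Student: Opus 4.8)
The plan is to prove the two equalities in sequence, the second building on the first, and everything rests on the fact that $I=I(G)$ is a squarefree monomial ideal, hence radical, so its associated primes coincide with its minimal primes.

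First I would establish $I=P_1\cap\cdots\cap P_r$. A monomial prime $P=(x_i\mid i\in S)$ contains $I$ exactly when it contains each generator $x_ix_j$, and since $P$ is prime this happens precisely when $S$ meets every edge, i.e. when $S$ is a vertex cover of $G$. Thus the minimal primes of $I$ are exactly the ideals $P_j$ attached to the minimal vertex covers $V_j$. Because a squarefree monomial ideal is radical, it equals the intersection of its minimal primes, which gives $I=P_1\cap\cdots\cap P_r$ and, at the same time, $\Ass I=\{P_1,\ldots,P_r\}$.

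For the symbolic power, the definition gives $I^{(m)}=\bigcap_{j=1}^r\left(I^mR_{P_j}\cap R\right)$, so it suffices to show $I^mR_{P_j}\cap R=P_j^m$ for each $j$. Localizing the decomposition $I=\bigcap_k P_k$ at $P_j$, I would use that the minimal vertex covers are pairwise incomparable: for each $k\neq j$ there is a variable $x_i$ with $i\in V_k\setminus V_j$, and since $i\notin V_j$ this $x_i$ becomes a unit in $R_{P_j}$, so $P_kR_{P_j}=R_{P_j}$. Hence the localization collapses to $IR_{P_j}=P_jR_{P_j}$, and raising to the $m$-th power gives $I^mR_{P_j}=P_j^mR_{P_j}$. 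Finally, since $P_j$ is generated by a subset of the variables, $R/P_j$ is a polynomial ring (a domain) and $P_j$ is generated by a regular sequence, so $P_j^m$ is $P_j$-primary; for a $P_j$-primary ideal the contraction from $R_{P_j}$ returns the ideal itself, so $P_j^mR_{P_j}\cap R=P_j^m$. Intersecting over $j$ then yields $I^{(m)}=P_1^m\cap\cdots\cap P_r^m$.

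The main obstacle is the contraction identity $I^mR_{P_j}\cap R=P_j^m$. The two ingredients that make it work are the incomparability of minimal vertex covers, which ensures that localizing at $P_j$ kills every minimal prime except $P_j$ itself, and the primariness of $P_j^m$, which ensures that the contraction recovers $P_j^m$ exactly rather than some strictly larger ideal.
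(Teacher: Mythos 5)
Your proof is correct. The paper offers no proof of this statement---it is imported verbatim from \cite[Lemma 2.5]{bocci2016}---and your argument (identifying the minimal primes of $I(G)$ with the minimal vertex covers, using that a squarefree monomial ideal is radical so that $\Ass I=\Min I$, and then computing each contraction $I^mR_{P_j}\cap R=P_j^m$ via the incomparability of minimal covers and the $P_j$-primariness of $P_j^m$) is precisely the standard argument behind the cited result.
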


\begin{lemma}\cite[Lemma 2.6]{bocci2016}\label{o.s.2}
	Let $I \subseteq S$ be a squarefree monomial ideal with minimal primary decomposition $I=P_1\cap\cdots \cap P_r ~~with ~~P_j = (x_{j_1},\ldots ,x_{js_j})$ for $j= 1,\ldots ,r$. Then $ {x_1^{a_1}}\cdots {x_n^{a_n}} \in I^{(m)} \mbox{ if and only if } a_{j_1}+\cdots +a_{j_{s_j}}\geq m$ for $j=1,\ldots,r$.
	
\end{lemma}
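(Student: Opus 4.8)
The plan is to reduce the symbolic power to a finite intersection of powers of the monomial primes $P_j$ and then read off membership of a monomial from its exponent vector. First I would note that a squarefree monomial ideal has no embedded associated primes, so $\Ass I=\{P_1,\ldots,P_r\}$ is exactly the set of minimal primes appearing in the given decomposition. Feeding this into the definition of the symbolic power gives
\[
I^{(m)}=\bigcap_{j=1}^{r}\bigl(I^mR_{P_j}\cap R\bigr).
\]
The crucial step is to show $I^mR_{P_j}\cap R=P_j^m$ for each $j$. Since localization is flat it commutes with the finite intersection $I=P_1\cap\cdots\cap P_r$, so $IR_{P_j}=\bigcap_k P_kR_{P_j}$. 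For $k\neq j$ the primes $P_j$ and $P_k$ are distinct minimal primes, hence $P_k\not\subseteq P_j$; choosing a variable $x_i\in P_k\setminus P_j$, this variable is a unit in $R_{P_j}$, whence $P_kR_{P_j}=R_{P_j}$. Therefore $IR_{P_j}=P_jR_{P_j}$ and, raising to the $m$-th power, $I^mR_{P_j}=P_j^mR_{P_j}$. Because $P_j$ is generated by a subset of the variables (a regular sequence), $P_j^m$ is $P_j$-primary, so its contraction from the localization is itself: $P_j^mR_{P_j}\cap R=P_j^m$. Combining these identities yields $I^{(m)}=\bigcap_{j=1}^{r}P_j^m$.

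Next I would translate this into the stated numerical criterion. For the monomial prime $P_j=(x_{j_1},\ldots,x_{j s_j})$, the ideal $P_j^m$ is generated by the monomials $x_{j_1}^{b_1}\cdots x_{j s_j}^{b_{s_j}}$ with $b_1+\cdots+b_{s_j}=m$, and a monomial $x_1^{a_1}\cdots x_n^{a_n}$ is divisible by such a generator precisely when $a_{j_1}+\cdots+a_{j s_j}\geq m$. Intersecting over $j$ then shows $x_1^{a_1}\cdots x_n^{a_n}\in I^{(m)}$ if and only if $a_{j_1}+\cdots+a_{j s_j}\geq m$ for every $j$, which is the assertion.

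I expect the main obstacle to be the identification $I^mR_{P_j}\cap R=P_j^m$ carried out in the first paragraph: it relies on the $P_j$ being the full set of associated (equivalently, minimal) primes, so that no spurious local factors survive the contraction, and on a power of a monomial prime being primary, so that contraction from $R_{P_j}$ recovers the ideal unchanged. Once this is secured, the passage to the combinatorial condition on exponents is routine.
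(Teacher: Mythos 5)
Your proof is correct. Note that the paper itself does not prove this statement---it is quoted from \cite[Lemma 2.6]{bocci2016}---so there is no internal proof to compare against; but your argument is exactly the natural one in the paper's framework. Your first paragraph (reducing $I^{(m)}$ to $\bigcap_j P_j^m$ via localization at the minimal primes, using that squarefree monomial ideals are radical and that powers of monomial primes are primary) is precisely a re-derivation of the paper's Lemma~\ref{o.s.1}, which is also cited from the same source and could simply have been invoked; your second paragraph, translating membership in $\bigcap_j P_j^m$ into the inequality $a_{j_1}+\cdots+a_{j_{s_j}}\geq m$ for each $j$ via divisibility by a generator of $P_j^m$, is the only step genuinely specific to this lemma, and you carry it out correctly.
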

Below we recall some definitions and results for the weighted oriented graph $ D .$ 
\begin{definition}
	A vertex cover $ C $ of $ D $ is a subset of $  V(D)  $ such that if $ (x, y) \in E(D) ,$ then
	$ x \in C $ or  $ y \in C . $ A vertex cover $ C $ of $ D $ is minimal if each proper subset of $ C $ is not a
	vertex cover of $ D. $ We set $(C)$ to be the ideal generated by the variables in $C.$
\end{definition}
\begin{remark}\cite[Remark 2]{pitones}\label{s.v.00}
	$ C $ is a minimal vertex cover of $ D $ if and only if $ C $ is a minimal vertex cover of $ G. $
\end{remark}
\begin{definition}
	Let $ x $ be a vertex of a weighted oriented graph $ D, $ then the sets $ N_D^+ (x) =	\{y : (x, y) \in E(D)\}  $ and  $ N_D^- (x) = \{y : (y, x) \in E(D)\}  $ are called the out-neighbourhood and the in-neighbourhood of $ x $, respectively. Moreover, the neighbourhood of $ x $ is the set $ N_D(x) = N_D^+ (x)\cup N_D^- (x) .$  Define $\deg_D(x) = |N_D(x)|$ for  $ x \in V(D) $.  A vertex $ x \in V(D) $ is called a source vertex if $N_D (x)= N_D^+ (x) .$ 
A vertex $ x \in V(D) $ is called a sink vertex if $N_D (x)= N_D^- (x) .$  
\end{definition}
\begin{definition}\cite[Definition 4]{pitones}
Let $ C $ be a vertex cover of a weighted oriented graph $ D, $ we define \vspace*{0.2cm}\\
\hspace*{3cm}$ L_1(C) = \{x \in C ~|~ N_D^+ (x) \cap C^c \neq        \phi \}, $ \vspace*{0.2cm}\\
\hspace*{2.85cm} $L_2(C) = \{x \in C ~|~x\notin L_1(C) ~\mbox{and}~  N_D^- (x) \cap C^c \neq   \phi \}$ and  \vspace*{0.2cm}\\ 
\hspace*{2.8cm}  $ L_3(C) = C \setminus (L_1(C) \cup L_2(C))$ \vspace*{0.2cm}\\
 	where $ C^c $ is the complement of $  C ,$ i.e., $ C^c = V(D) \setminus C. $      	
\end{definition}
\begin{lemma}\cite[Proposition 6]{pitones}\label{s.v.0}
Let $ C $ be  a  vertex  cover  of $ D. $  Then $ L_3(C) =\phi $ if  and  only  if $ C $ is  a minimal vertex cover of $ D .$  	

\end{lemma}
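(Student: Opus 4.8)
The plan is to unwind the definition of $L_3(C)$ into a purely combinatorial condition and then match it against a local characterization of minimality. First I would observe that, directly from the definitions of $L_1(C), L_2(C), L_3(C)$, a vertex $x \in C$ lies in $L_3(C)$ precisely when it belongs neither to $L_1(C)$ nor to $L_2(C)$, i.e. when both $N_D^+(x) \cap C^c = \phi$ and $N_D^-(x) \cap C^c = \phi$. Since $N_D(x) = N_D^+(x) \cup N_D^-(x)$, these two conditions together are equivalent to $N_D(x) \cap C^c = \phi$, that is, $N_D(x) \subseteq C$. Thus $x \in L_3(C)$ if and only if $x \in C$ and every neighbour of $x$ (whether reached by an outgoing or an incoming arrow) already lies in $C$.

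Next I would establish the key local criterion: for a vertex cover $C$ of $D$ and a vertex $x \in C$, the set $C \setminus \{x\}$ is again a vertex cover of $D$ if and only if $N_D(x) \subseteq C$. The idea is that deleting $x$ can only endanger the covering property on edges incident to $x$, since every edge not touching $x$ retains the same covering endpoint in $C \setminus \{x\}$. An edge incident to $x$, of the form $(x,y)$ or $(y,x)$, is uncovered by $C \setminus \{x\}$ exactly when its other endpoint $y$ fails to lie in $C \setminus \{x\}$; as $y \neq x$ (there are no loops), this means $y \notin C$, i.e. $y \in C^c$. Hence $C \setminus \{x\}$ is a vertex cover if and only if no neighbour of $x$ lies in $C^c$, which is the condition $N_D(x) \cap C^c = \phi$, equivalently $N_D(x) \subseteq C$.

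Finally I would combine the two observations with the standard fact that a vertex cover $C$ is minimal if and only if $C \setminus \{x\}$ fails to be a vertex cover for every $x \in C$ (if some proper sub-cover existed, deleting any element outside it would still leave a cover, and conversely). By the local criterion this failure, for each $x \in C$, is equivalent to $N_D(x) \not\subseteq C$; by the first step this is equivalent to $x \notin L_3(C)$. Therefore $C$ is minimal if and only if no $x \in C$ lies in $L_3(C)$, i.e. if and only if $L_3(C) = \phi$, which is the assertion. I do not expect a genuine obstacle here; the only point requiring care is the bookkeeping in the local criterion, namely keeping track of both orientations $N_D^+$ and $N_D^-$ so that the union $N_D(x)$ is handled correctly and no incident edge is overlooked.
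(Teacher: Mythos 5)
Your proof is correct: unwinding the definitions to show $x \in L_3(C)$ iff $N_D(x) \subseteq C$, characterizing minimality by single-vertex deletions, and matching the two is exactly the right argument. Note that the paper itself gives no proof of this lemma --- it is quoted from Pitones--Reyes--Toledo \cite[Proposition 6]{pitones} --- and your argument is essentially the standard one found in that reference, so there is nothing to fault beyond the observation that you have supplied a proof the paper deliberately omits by citation.
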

 
\begin{definition}\cite[Definition 7]{pitones}
A vertex cover $ C $ of $ D $ is strong if for each $ x \in L_3(C)  $ there is $ (y, x) \in 
E(D) $ such that $ y \in L_2(C) \cup L_3(C)$ with $  y \in V^+$   (i.e., $ w(y) \neq  1 $).   
\end{definition}

\begin{definition}\cite[Definition 32]{pitones}
	A weighted oriented graph $ D $ has the minimal-strong property if each
	strong vertex cover is a minimal vertex cover.

\end{definition}
The following lemma gives a class of weighted oriented graphs which satisfy the minimal-strong property.
\begin{lemma}\cite[Lemma 47]{pitones}\label{s.v.4}
	If the elements of $ V^+ $ are sinks, then $ D $ has the minimal-strong property. 	
\end{lemma}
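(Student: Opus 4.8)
The plan is to reduce the minimal-strong property to the criterion recorded in Lemma~\ref{s.v.0}, which states that a vertex cover $C$ of $D$ is minimal precisely when $L_3(C)=\emptyset$. Thus it suffices to show that every strong vertex cover $C$ of $D$ satisfies $L_3(C)=\emptyset$, for then each such $C$ is automatically minimal, which is exactly the minimal-strong property.

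First I would argue by contradiction. Suppose $C$ is a strong vertex cover with $L_3(C)\neq\emptyset$, and fix any $x\in L_3(C)$. By the definition of a strong vertex cover, there must exist a directed edge $(y,x)\in E(D)$ with $y\in L_2(C)\cup L_3(C)$ and $y\in V^+$, i.e.\ $w(y)\neq 1$. The crucial step is then to invoke the standing hypothesis that every element of $V^+$ is a sink: since $y\in V^+$, the vertex $y$ is a sink, so $N_D(y)=N_D^-(y)$, equivalently $N_D^+(y)=\emptyset$, meaning no arrow leaves $y$. But the edge $(y,x)$ is by definition an arrow leaving $y$ (so $x\in N_D^+(y)$), which contradicts $N_D^+(y)=\emptyset$. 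Hence $L_3(C)$ cannot be nonempty, and $C$ is minimal by Lemma~\ref{s.v.0}.

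I do not expect a genuine obstacle here: once the definitions are unfolded, the conclusion is a one-line contradiction. The only point requiring care is the \emph{direction} of the edge in the strong-cover condition. The witness vertex $y$ is required to be the \emph{initial} vertex of an arrow $(y,x)$, and it is exactly this outgoing arrow that is incompatible with $y$ being a sink. No finer case analysis on the partition $L_1(C),L_2(C),L_3(C)$ is needed beyond observing that the strong-cover hypothesis is precisely what forces the existence of such an out-edge from a vertex of $V^+$.
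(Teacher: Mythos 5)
Your proof is correct, and it is essentially the argument behind the cited result: the paper itself states this lemma without proof (quoting it from \cite{pitones}), and your one-line contradiction---the strong-cover condition would produce an outgoing edge $(y,x)$ from some $y\in V^+$, which is impossible since every vertex of $V^+$ is a sink and hence has $N_D^+(y)=\emptyset$, so $L_3(C)=\emptyset$ and $C$ is minimal by Lemma~\ref{s.v.0}---is exactly how the original proof proceeds. No gap here; your care about the direction of the witnessing edge is precisely the right point.
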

The next lemma describes the irreducible decomposition of the edge ideal of  weighted oriented graph $ D $ in terms of irreducible ideals associated with the strong vertex covers of $D$.

\begin{lemma}\cite[Theorem 25, Remark 26]{pitones}\label{s.v.2}
Let $ D $ be  a weighted oriented graph and $C_1,\ldots, C_s$ are the strong vertex covers of $ D ,$ then  the irredundant irreducible decomposition of $ I(D) $ is
    $$I(D) =  I_{C_1} \cap\cdots\cap I_{C_s} $$ 
where each $ I_{C_i} = ( L_1(C_i) \cup \{x_j^{w(x_j)}~|~x_j \in L_2(C_i) \cup L_3(C_i)\} ) ,$   $ \rad(I_{C_i})=P_i = (C_i)$.
\end{lemma}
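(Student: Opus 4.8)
The plan is to check three things in turn: that each $I_{C_i}$ is irreducible with $\rad(I_{C_i})=(C_i)$, that the intersection equals $I(D)$, and that it is irredundant. Writing $C$ for a strong cover, since $C=L_1(C)\sqcup L_2(C)\sqcup L_3(C)$ is a disjoint partition, $I_C$ is generated by pure powers of distinct variables---namely $x$ for $x\in L_1(C)$ and $x^{w_x}$ for $x\in L_2(C)\cup L_3(C)$---so it is an irreducible monomial ideal, and passing to radicals replaces each $x^{w_x}$ by $x$, whence $\rad(I_C)=(C)$. For the inclusion $I(D)\subseteq I_C$, which in fact holds for \emph{every} vertex cover $C$, I would check that each generator $x_ix_j^{w_j}$ lies in $I_C$: if $x_j\in C$ then the factor $x_j$ (when $x_j\in L_1(C)$) or the factor $x_j^{w_j}$ (when $x_j\in L_2(C)\cup L_3(C)$) is a generator of $I_C$; and if $x_j\notin C$ then $x_i\in C$ and $x_j\in N_D^+(x_i)\cap C^c$ puts $x_i\in L_1(C)$, so the factor $x_i$ is a generator.

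For the reverse inclusion $\bigcap_{C\text{ strong}}I_C\subseteq I(D)$ I argue on monomials. Let $x^a=x_1^{a_1}\cdots x_n^{a_n}\notin I(D)$; then for every edge $(x_i,x_j)$ we have $a_i=0$ or $a_j<w_j$. Put $A=\{x_i\mid a_i\geq w_i\}$ and $C=V(D)\setminus A$. An edge with both endpoints in $A$ would give a generator dividing $x^a$, so $C$ is a vertex cover; moreover $x^a\notin I_C$, because each $x\in L_2(C)\cup L_3(C)$ satisfies $a_x<w_x$ by construction, while each $x\in L_1(C)$ has an out-neighbour in $C^c=A$, forcing $a_x=0$ (else that edge yields a dividing generator). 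The subtlety is that $C$ need not be strong, so I would repair it: while strongness fails at some $x\in L_3(C)$, all neighbours of $x$ lie in $C$, so $C\setminus\{x\}$ is again a vertex cover, and the failure condition---every in-neighbour $y$ of $x$ lies in $L_1(C)$ or has $w_y=1$---forces $a_y=0$ for precisely those in-neighbours that can migrate into $L_1$ upon deleting $x$; a short check of how the sets $L_i$ change under this deletion shows $x^a\notin I_{C\setminus\{x\}}$ persists. As each step shrinks $C$, the process terminates at a strong cover $C^*$ with $x^a\notin I_{C^*}$.

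The remaining and decisive step is irredundancy, where the strong hypothesis is genuinely used. For a fixed strong cover $C$ I would test the corner monomial
$$
m_C \;=\; \prod_{x_j\in L_2(C)\cup L_3(C)} x_j^{\,w_j-1}\,\cdot\,\prod_{x_j\in C^c} x_j^{\,N},\qquad N\gg 0 .
$$
By the definition of $L_2(C),L_3(C)$ (no out-neighbour in $C^c$), a generator $x_px_q^{w_q}$ dividing $m_C$ would force $x_q\in C^c$ and $x_p\in L_2(C)\cup L_3(C)$ with $x_q\in N_D^+(x_p)\cap C^c$, which is impossible; hence $m_C\notin I(D)$, and clearly $m_C\notin I_C$. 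To see $m_C\in I_{C'}$ for every strong $C'\neq C$: if not, a comparison of generators forces $C'\subseteq C$; choosing $v\in C\setminus C'$, all neighbours of $v$ lie in $C'$, so $v\in L_3(C)$, and strongness of $C$ yields an arrow $(y,v)\in E(D)$ with $y\in(L_2(C)\cup L_3(C))\cap V^+$. Since $v\in C'^c$, the cover $C'$ must contain $y$, and that same arrow places $y\in L_1(C')$; but $y\in V^+$ gives $w_y\geq 2$, so $\deg_y m_C=w_y-1\geq 1$ and the degree-one generator $y$ of $I_{C'}$ divides $m_C$, a contradiction. Thus $m_C\in\bigcap_{C'\neq C}I_{C'}\setminus I_C$, no component can be dropped, and by uniqueness of the irredundant irreducible decomposition of a monomial ideal the displayed intersection is it. The crux, and the main obstacle, is exactly this matching of the combinatorial and algebraic sides: the weight $w_y\geq 2$ coming from $V^+$ is precisely what keeps $\deg_y m_C\geq 1$, which is why ``strong'' is the correct notion.
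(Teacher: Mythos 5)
Your proposal is correct, but there is nothing in the paper to compare it against: the paper does not prove this lemma at all, it quotes it verbatim from Pitones--Reyes--Toledo \cite[Theorem 25, Remark 26]{pitones}. Judged on its own terms, your argument is a sound, self-contained proof. The containment $I(D)\subseteq I_C$ for an \emph{arbitrary} cover, the construction $A=\{x_i \mid a_i\geq w_i\}$, $C=V(D)\setminus A$, and the corner monomial $m_C$ all check out, and your irredundancy step is exactly where strongness must enter; it does so correctly, since the arrow $(y,v)$ with $y\in (L_2(C)\cup L_3(C))\cap V^+$ gives $\deg_y m_C=w_y-1\geq 1$ while $y\in L_1(C')$ makes $y$ itself a generator of $I_{C'}$. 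The one step you compress, the ``repair'' loop, does work, and the cleanest justification is to note that when strongness fails at $x\in L_3(C)$, every minimal generator of $I_{C\setminus\{x\}}$ is already a generator of $I_C$: the only vertices that migrate into $L_1(C\setminus\{x\})$ are in-neighbours $y$ of $x$ lying in $L_2(C)\cup L_3(C)$, the failure of strongness forces $w_y=1$ for exactly these, so the new degree-one generator $y$ coincides with the old generator $y^{w_y}$; hence $I_{C\setminus\{x\}}\subseteq I_C$ and the non-membership of $x^a$ persists, while termination is guaranteed because a minimal cover has $L_3=\emptyset$ and is therefore vacuously strong. Two cosmetic points: the verification that $m_C\notin I(D)$ is redundant, since it follows from $m_C\notin I_C$ together with $I(D)\subseteq I_C$; and in arguing that a generator $x_px_q^{w_q}$ dividing $m_C$ forces $x_p\in L_2(C)\cup L_3(C)$, you should also dispose of the case $x_p\in C^c$, which contradicts $C$ being a vertex cover (it is immediate, but it is a case). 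Taking $N\geq\max_j w_j$ suffices for ``$N\gg 0$.''
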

\begin{corollary}\cite[Remark 26]{pitones}\label{s.v.3}
	Let $ D $ be a weighted oriented graph. Then $ P $ is an associated 
	prime of $ I(D) $ if and only if $ P = (C) $ for some strong vertex cover $ C $ of $ D. $
\end{corollary}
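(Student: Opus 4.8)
The plan is to read off the associated primes of $I(D)$ directly from the irreducible decomposition supplied by Lemma \ref{s.v.2}. First I would invoke that lemma to write
\[
I(D) = I_{C_1} \cap \cdots \cap I_{C_s},
\]
where $C_1, \ldots, C_s$ are exactly the strong vertex covers of $D$, the intersection is irredundant, and $\rad(I_{C_i}) = P_i = (C_i)$ for each $i$. Since each component $I_{C_i} = (L_1(C_i) \cup \{x_j^{w(x_j)} \mid x_j \in L_2(C_i) \cup L_3(C_i)\})$ is an irreducible monomial ideal, it is primary, so the displayed intersection is in fact a primary decomposition of $I(D)$.

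Next I would check that this primary decomposition is minimal. It is irredundant by Lemma \ref{s.v.2}, so it remains only to verify that the radicals are pairwise distinct. But $(C_i) = (C_j)$ forces $C_i = C_j$ as subsets of $V(D)$, because $(C)$ is the monomial prime generated precisely by the variables indexed by $C$; since $C_1, \ldots, C_s$ are distinct strong vertex covers, the primes $(C_1), \ldots, (C_s)$ are distinct. Hence no two primary components share a radical, and the decomposition is minimal.

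Finally, the first uniqueness theorem of primary decomposition identifies $\Ass(I(D))$ with the set of radicals occurring in any minimal primary decomposition, giving
\[
\Ass(I(D)) = \{\rad(I_{C_1}), \ldots, \rad(I_{C_s})\} = \{(C_1), \ldots, (C_s)\}.
\]
Because Lemma \ref{s.v.2} guarantees that $C_1, \ldots, C_s$ range over all strong vertex covers of $D$, this is precisely the assertion that $P$ is an associated prime of $I(D)$ if and only if $P = (C)$ for some strong vertex cover $C$ of $D$.

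The step I expect to be the crux is establishing minimality of the primary decomposition, namely the distinctness of the radicals $(C_i)$; once that is settled the result drops out of the uniqueness theorem, and the remainder is bookkeeping resting on Lemma \ref{s.v.2}.
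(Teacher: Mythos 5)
Your proposal is correct. The paper gives no proof of this statement at all --- it is imported verbatim from \cite[Remark 26]{pitones} as an immediate consequence of Lemma \ref{s.v.2} --- and your argument (each $I_{C_i}$ is an irreducible, hence primary, monomial ideal; the decomposition is irredundant with pairwise distinct radicals $(C_i)$, so it is a minimal primary decomposition; the first uniqueness theorem then identifies $\Ass(I(D))$ with $\{(C_1),\ldots,(C_s)\}$) is exactly the standard derivation that the citation stands in for.
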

\begin{notation}
The	degree of a monomial $m \in  k[x_1,\ldots,x_n]$ is denoted by $\deg(m).$

\end{notation}

Now we recall the definition of Castelnuovo-Mumford
 regularity using local cohomology. Let $M$ be a non zero  module over the polynomial ring $R$.
 If $  H^i_\m(M) $
 denotes the $ i$-th  local cohomology module of the  $ R$-module $ M $
 with support on the  maximal ideal $ \m ,$ we set $$ a_i(M) = \max\{   j \in \mathbb{Z} : [H^i_\m(M)]_j \neq 0 \}  $$ (or $a_i(M)  =  -\infty $ if $ H^i_\m(M) =   0 $) where $ [H^i_\m(M)]_j $ denotes the $ j $-th graded component of the $ i$-th local cohomology
 module of $ M. $ Then the Castelnuovo-Mumford
 regularity   of $ M $ is defined as  $$ \reg M = \max_{0 \leq i \leq \dim M} \{a_i(M) + i\}. $$
 One of the important asymptotic invariants for symbolic powers is resgurence which was introduced by Bocci and Harbourne \cite{bocci2010} to answer the  ideal containment problem. 
In general, computation of resurgence of an ideal is difficult. In \cite{bocci2010}, one lower bound of resurgence of an ideal is given in terms of  another invariant known as Waldschmidt constant of $ I. $   Let $\alpha(I)$ $ = \min\{d ~|~ I_d \neq 0\}, $ i.e., $\alpha(I)$  is the smallest degree of a nonzero element in $ I.$ Then the  Waldschmidt constant  of $ I $ is  defined as $$\widehat{\alpha}{(I)} = \displaystyle{\lim_{s\rightarrow\infty}{} \frac{\alpha{(I^{(s)})}}{s}}.$$

\section{Symbolic Powers in Oriented Graphs}
We provide an explicit description of the symbolic powers for edge ideals of weighted oriented graphs when the elements of $ V^{+}  $ are sinks. In the next lemma, we give a sufficient condition for the equality of the symbolic and ordinary powers of the edge ideals for some classes of weighted oriented graphs.  

\begin{lemma}\label{sym.lemma.1}
Let $I = I(D)$ be the edge ideal of a weighted oriented graph $ D .$ If $ V(D) $ is a strong vertex cover of $ D ,$ then $ I^{(k)}=I^k $ for all $ k. $	
\end{lemma}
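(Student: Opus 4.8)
The plan is to show that the hypothesis promotes the graded maximal ideal to an associated prime of $I$, after which the symbolic power is squeezed down to the ordinary power.

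First I would note that, since $V(D)$ is a strong vertex cover by assumption, Corollary \ref{s.v.3} applies with $C = V(D)$ and tells us that $P = (V(D)) = (x_1, \dots, x_n) =: \mathfrak{m}$, the graded maximal ideal of $R$, is an associated prime of $I = I(D)$. Hence $\mathfrak{m} \in \Ass I$, so the factor indexed by $\mathfrak{m}$ appears in the intersection
$$ I^{(k)} = \bigcap_{p \in \Ass I} \bigl( I^k R_p \cap R \bigr), $$
and therefore $I^{(k)} \subseteq I^k R_{\mathfrak{m}} \cap R$.

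Next I would verify the standard fact that $I^k R_{\mathfrak{m}} \cap R = I^k$. Writing an irredundant primary decomposition $I^k = Q_1 \cap \cdots \cap Q_t$ with $\sqrt{Q_i} = P_i$, each $P_i$ is homogeneous because $I^k$ is a homogeneous monomial ideal, so $P_i \subseteq \mathfrak{m}$ for every $i$. Consequently none of the primary components is lost when we localize at $\mathfrak{m}$ and contract, giving $I^k R_{\mathfrak{m}} \cap R = \bigcap_i Q_i = I^k$.

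Finally, combining this with the trivial inclusion $I^k \subseteq I^{(k)}$ (which holds for any ideal since $I^k \subseteq I^k R_p \cap R$ for every $p$) yields $I^k \subseteq I^{(k)} \subseteq I^k R_{\mathfrak{m}} \cap R = I^k$, hence $I^{(k)} = I^k$ for all $k$. The one step carrying genuine content is the first, namely invoking Corollary \ref{s.v.3} to see that the full vertex set being strong forces $\mathfrak{m}$ to be an associated prime of $I$; the remaining two steps are purely formal properties of symbolic powers of homogeneous ideals, so I do not anticipate a real obstacle beyond recording them correctly.
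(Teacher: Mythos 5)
Your proof is correct and follows essentially the same route as the paper: both hinge on Corollary \ref{s.v.3} to conclude that $\mathfrak{m} = (V(D))$ is an associated prime of $I$, after which the symbolic power collapses onto the ordinary power. The only difference is that the paper outsources the collapse to \cite[Lemma 3.3]{cooper} (observing that $\mathfrak{m}$ is the unique maximal associated prime), whereas you prove it inline via the observation that all associated primes of the monomial ideal $I^k$ are homogeneous, hence contained in $\mathfrak{m}$, so that $I^k R_{\mathfrak{m}} \cap R = I^k$; your direct argument for this standard fact is sound.
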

\begin{proof}
Since $ V(D) $ is the strong vertex cover of $ D ,$ by Corollary \ref{s.v.3}, $ P = (V(D)) $ is an associated prime of $ I(D) .$ Here $ P $ is the unique maximal associated prime in $\Ass(I).$ Thus  by \cite[Lemma 3.3]{cooper}, we have $ I^{(k)}=I^k $ for all $ k. $  
\end{proof}
 A  cycle is naturally oriented if  all edges of cycle are oriented in clockwise direction. In a naturally oriented unicyclic graph, the cycle is naturally oriented and each edge of the tree  connected with the cycle is oriented away from the cycle. 
 \begin{corollary}\label{unicycle}
 	Let $I = I(D)$ be the edge ideal of a weighted  oriented  graph $D.$  If there are weighted naturally oriented unicyclic graphs   $ D_1,\ldots,D_s $ of $ D $ such that $ V(D_1),\ldots,V(D_s) $ is a partition of   $ V(D)$ and   $ w(x) \neq 1 $ if $ \deg_{D_i}(x) > 1 $ for each $ i ,$
 	then $ I^{(k)}=I^k $ for all $ k. $
 \end{corollary}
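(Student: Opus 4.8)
The plan is to reduce the whole statement to Lemma~\ref{sym.lemma.1}: if I can show that $V(D)$ is itself a strong vertex cover of $D$, then that lemma immediately yields $I^{(k)}=I^k$ for all $k$. Thus the entire task becomes verifying the strong vertex cover condition for the cover $C=V(D)$, which is trivially a vertex cover.

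First I would unwind the sets $L_1,L_2,L_3$ for $C=V(D)$. Here $C^c=V(D)\setminus C=\emptyset$, so $N_D^+(x)\cap C^c=N_D^-(x)\cap C^c=\emptyset$ for every $x$; hence $L_1(C)=L_2(C)=\emptyset$ and therefore $L_3(C)=V(D)$. Consequently the definition of a strong vertex cover specializes to the single requirement: for each $x\in V(D)$ there is an edge $(y,x)\in E(D)$ with $y\in V^+$, the membership $y\in L_2(C)\cup L_3(C)$ being automatic since that union is all of $V(D)$. In other words, it suffices to produce for every vertex an in-neighbour of non-trivial weight.

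The core of the argument is then a structural observation inside the components. Since $D$ is the disjoint union of the $D_i$ (their vertex sets partition $V(D)$), every edge of $D$ lies in a single $D_i$ and $N_D(x)=N_{D_i}(x)$ for $x\in V(D_i)$; so I may fix one naturally oriented unicyclic graph $D_i$ and a vertex $x\in V(D_i)$. If $x$ lies on the unique cycle of $D_i$, the clockwise orientation supplies the preceding cycle vertex $y$ as an in-neighbour, and $\deg_{D_i}(y)\geq 2$ because $y$ meets both of its cycle-neighbours. If instead $x$ lies on a tree attached to the cycle, then---tree edges being oriented away from the cycle---the parent $y$ of $x$ gives an edge $(y,x)\in E(D)$ with $\deg_{D_i}(y)\geq 2$, since $y$ is joined both to $x$ and to its own parent, or to its two cycle-neighbours when $y$ is the attaching vertex. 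Either way $\deg_{D_i}(y)>1$, so the hypothesis forces $w(y)\neq 1$, i.e.\ $y\in V^+$.

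Putting these together, every vertex of $D$ has an in-neighbour in $V^+$, so $V(D)$ is a strong vertex cover and Lemma~\ref{sym.lemma.1} concludes the proof. The one delicate point---and the only genuine obstacle---is the structural case analysis above: one must confirm that the natural orientation always furnishes an in-neighbour and that this in-neighbour necessarily has degree exceeding $1$, with the pendant-leaf and bare-cycle configurations being the cases worth checking explicitly.
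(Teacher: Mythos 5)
Your overall route is the same as the paper's: both proofs reduce the corollary to Lemma~\ref{sym.lemma.1} by showing that $C=V(D)$ is a strong vertex cover of $D$. The difference is that the paper simply cites \cite[Proposition 15]{pitones} for the strong-cover property, whereas you prove it from scratch: the computation $L_1(C)=L_2(C)=\emptyset$ and $L_3(C)=V(D)$ when $C^c=\emptyset$, the resulting reformulation (every vertex must have an in-neighbour lying in $V^+$), and the case analysis on cycle vertices versus tree vertices constitute a self-contained verification of exactly what that citation supplies. That part of your argument is sound.

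However, one intermediate claim is false as stated: from the hypothesis that $V(D_1),\ldots,V(D_s)$ partition $V(D)$ you conclude that ``$D$ is the disjoint union of the $D_i$,'' so that every edge of $D$ lies in some $D_i$ and $N_D(x)=N_{D_i}(x)$. The hypothesis only concerns vertex sets; $D$ may have many edges belonging to no $D_i$, both within a single part and between parts. Indeed, the paper's very next application, Corollary~\ref{tournament}, invokes Corollary~\ref{unicycle} for a weighted oriented complete graph $D$ with $D_1$ the naturally oriented Hamiltonian cycle, where $E(D)\supsetneq E(D_1)$, so a proof that genuinely relied on the disjoint-union claim would not cover the case the paper needs. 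Fortunately, nothing downstream in your argument uses the false part: to produce, for $x\in V(D_i)$, an in-neighbour of $x$ in $D$ lying in $V^+$, you only need that an edge of $D_i$ is an edge of $D$ (i.e., $E(D_i)\subseteq E(D)$, which holds because each $D_i$ is a subgraph of $D$), never the equality of neighbourhoods. Deleting the disjoint-union sentence and appealing only to $E(D_i)\subseteq E(D)$ repairs the proof completely.
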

 \begin{proof}
 	 Since there are weighted naturally oriented unicyclic graphs   $ D_1,\ldots,D_s $ of $ D $ such that $ V(D_1), \ldots,V(D_s) $ is a partition of $ V(D)$ and  $ w(x) \neq 1 $ if $ \deg_{D_i}(x) > 1 $ for each $ i ,$   by \cite[Proposition 15]{pitones},  $ V(D) $ is a strong vertex cover of $ D. $ Therefore by Lemma    \ref{sym.lemma.1}, we have $ I^{(k)}=I^k $ for all $ k. $ 	
\end{proof}

\begin{corollary}\label{tournament}
	Let $I = I(D)$ be the edge ideal of a weighted oriented complete graph $D$ on the vertex set $ \{x_1,\ldots,x_n\} $ where the cycle $ C_n=(x_1,\ldots,x_n) $ is naturally oriented and the    diagonals are oriented in any direction such that $ w(x) \neq 1 $ for any vertex $x.$ Then $ I^{(k)}=I^k $ for all $ k. $ 
\end{corollary}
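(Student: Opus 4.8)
The plan is to reduce Corollary \ref{tournament} to Lemma \ref{sym.lemma.1} exactly as was done for Corollary \ref{unicycle}, so the entire task is to verify that under the stated hypotheses the full vertex set $V(D)$ is a strong vertex cover of $D$. Once that is established, Lemma \ref{sym.lemma.1} immediately gives $I^{(k)}=I^k$ for all $k$. So I would open by recalling the definition of a strong vertex cover and computing $L_1(V(D)), L_2(V(D)), L_3(V(D))$ for the cover $C=V(D)$.

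The key observation is that when $C = V(D)$ is the entire vertex set, the complement $C^c = \emptyset$. Therefore for every vertex $x$ we have $N_D^+(x)\cap C^c = \emptyset$ and $N_D^-(x)\cap C^c = \emptyset$, which forces $L_1(C) = \emptyset$ and $L_2(C)=\emptyset$, whence $L_3(C) = C \setminus (L_1(C)\cup L_2(C)) = V(D)$. Thus to show $C=V(D)$ is strong I must verify the defining condition on $L_3$: for each $x \in L_3(C) = V(D)$ there exists an edge $(y,x)\in E(D)$ with $y \in L_2(C)\cup L_3(C) = V(D)$ and $y \in V^+$. Since $L_2(C)\cup L_3(C) = V(D)$ here, the membership $y\in L_2(C)\cup L_3(C)$ is automatic, and since $w(x)\neq 1$ for every vertex by hypothesis we have $V^+ = V(D)$, so any in-neighbour $y$ works as long as one exists. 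Hence the whole argument collapses to showing that every vertex $x$ of $D$ has at least one in-neighbour, i.e. $N_D^-(x)\neq\emptyset$.

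The remaining step, which I expect to be the only real content, is to check that the prescribed orientation leaves no source vertex: every vertex must have an incoming arrow. Here I would use the structure of $D$ as a complete graph whose Hamiltonian cycle $C_n = (x_1,\ldots,x_n)$ is naturally oriented. The natural (cyclic) orientation of $C_n$ means the arrows run $x_1\to x_2\to\cdots\to x_n\to x_1$, so every vertex $x_i$ already receives the arrow from its cyclic predecessor $x_{i-1}$ (indices mod $n$), giving $x_{i-1}\in N_D^-(x_i)\neq\emptyset$ regardless of how the diagonals are oriented. This handles all vertices uniformly, so no vertex is a source and the strong-cover condition is met.

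Combining these observations, $V(D)$ is a strong vertex cover of $D$, and the conclusion $I^{(k)}=I^k$ for all $k$ follows from Lemma \ref{sym.lemma.1}. The main (and essentially sole) obstacle is the bookkeeping in the second paragraph: one must be careful that taking $C$ to be all of $V(D)$ really does send $L_1$ and $L_2$ to the empty set and collapses $L_3$ to everything, because the entire verification of strength hinges on that collapse together with the hypothesis $V^+=V(D)$ and the absence of a source guaranteed by the natural orientation of the spanning cycle.
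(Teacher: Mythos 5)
Your proof is correct, and it reaches the conclusion by a more self-contained route than the paper does. The paper's own proof is a one-line reduction: it observes that the naturally oriented cycle $D_1$ on $C_n=(x_1,\ldots,x_n)$ is a spanning weighted naturally oriented unicyclic subgraph with $w(x)\neq 1$ at every vertex of degree $>1$, and then invokes Corollary \ref{unicycle} (with $s=1$), which in turn rests on \cite[Proposition 15]{pitones} to certify that $V(D)$ is a strong vertex cover before applying Lemma \ref{sym.lemma.1}. You instead verify the strong-cover property of $C=V(D)$ directly from the definition: since $C^c=\emptyset$, both $L_1(C)$ and $L_2(C)$ are empty, so $L_3(C)=V(D)$, and the strength condition reduces to every vertex having an in-neighbour in $V^+$; the hypothesis $w(x)\neq 1$ for all $x$ gives $V^+=V(D)$, and the naturally oriented spanning cycle supplies the arrow $x_{i-1}\to x_i$ into each vertex, independently of how the diagonals are oriented. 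Both arguments funnel through the same key lemma (Lemma \ref{sym.lemma.1}), but yours makes the combinatorial verification explicit and avoids the external citation, whereas the paper's buys brevity by reusing the machinery of Corollary \ref{unicycle}. The only cosmetic omission in your write-up is the (trivial) remark that $V(D)$ is indeed a vertex cover, which the definition of strong vertex cover presupposes.
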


\begin{proof}
Let $ D_1 $ be the weighted naturally oriented cycle whose underlying graph is  the cycle $ C_n=(x_1,\ldots,x_n) .$  Here $ V(D_1) = V(D) $ and  $ w(x) \neq 1 $ for all  $ x \in V(D_1) .$   Then by Corollary \ref{unicycle}, we have  $ I^{(k)}=I^k $ for all $ k. $
\end{proof}
In the next theorem, we describe  the symbolic powers for edge ideals of  weighted oriented graphs when
      the elements of $ V^{+}  $ are sinks, in terms of irreducible ideals associated to the strong vertex covers. For the remainder of this section,  we set $ V^+ \neq \phi. $
\begin{theorem}\label{sym.lemma.2}
Let $I = I(D)$ be the edge ideal of a weighted oriented graph $ D $ where  the elements of $ V^{+}  $ are sinks and $ C_1,\ldots,C_s $ be the strong vertex covers of $ D .$  Let $ w_j = w(x_j) $ if $ x_j \in V^+. $ Then  the irredundant irreducible decomposition of $ I $ is
$$ I= I_{C_1}\cap \cdots \cap I_{C_s}$$  and $$ I^{(m)}   = {I_{C_1}}^{m}\cap \cdots \cap {I_{C_s}}^{m} $$
  where each $ I_{C_i} = ( \{ x_j~|~x_j \in C_i \setminus V^{+}\} \cup \{ x_j^{w_j}~|~x_j \in C_i \cap V^{+} \})  ,$   $  \rad(I_{C_i}) = P_i = (C_i)$    and $ P_i $'s are minimal primes of $ I. $  	
\end{theorem}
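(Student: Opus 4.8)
The plan is to derive both assertions from the irreducible decomposition of Pitones et al. (Lemma \ref{s.v.2}) together with the special features forced by the hypothesis that every vertex of $V^+$ is a sink. For the decomposition, I would first invoke Lemma \ref{s.v.4} to conclude that $D$ has the minimal-strong property, so each strong vertex cover $C_i$ is a minimal vertex cover; by Lemma \ref{s.v.0} this gives $L_3(C_i)=\emptyset$, whence $C_i=L_1(C_i)\cup L_2(C_i)$ disjointly. Next I would observe that a sink $x$ has $N_D^+(x)=\emptyset$, so no vertex of $V^+$ can lie in $L_1(C_i)$; thus $C_i\cap V^+\subseteq L_2(C_i)$ and every vertex of $L_1(C_i)$ has weight $1$. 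Feeding this into the generating set of $I_{C_i}$ from Lemma \ref{s.v.2}, the linear generators are exactly the $x_j$ with $x_j\in C_i\setminus V^+$ (the weight-one vertices of $L_1(C_i)\cup L_2(C_i)$) while the genuine powers are the $x_j^{w_j}$ with $x_j\in C_i\cap V^+$. This matches the stated form of $I_{C_i}$ and, since $\rad(I_{C_i})=P_i=(C_i)$ with the $C_i$ pairwise incomparable minimal covers, identifies the $P_i$ as the (minimal) associated primes of $I$ via Corollary \ref{s.v.3}.

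For the symbolic power I would work directly from the definition $I^{(m)}=\bigcap_{P\in\Ass I}(I^mR_P\cap R)$, using that $\Ass I=\{P_1,\dots,P_s\}$. The first step is to compute the localization $IR_{P_i}$. Because the $C_j$ are minimal vertex covers, for $j\neq i$ there is a vertex $x\in C_j\setminus C_i$; then $x\notin P_i$, so the generator $x^{w(x)}$ of $I_{C_j}$ becomes a unit in $R_{P_i}$ and $I_{C_j}R_{P_i}=R_{P_i}$. Localizing the decomposition $I=\bigcap_j I_{C_j}$ therefore yields $IR_{P_i}=I_{C_i}R_{P_i}$, and hence $I^mR_{P_i}=I_{C_i}^mR_{P_i}$.

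The heart of the argument is then to show $I_{C_i}^mR_{P_i}\cap R=I_{C_i}^m$. Here I would use that $I_{C_i}=(x_j^{w_j}\mid x_j\in C_i)$ is generated by pure powers of \emph{distinct} variables, which form a regular sequence; thus $I_{C_i}$ is a complete intersection, $R/I_{C_i}$ is Cohen-Macaulay, and the associated graded ring $\gr_{I_{C_i}}(R)$ is a polynomial ring over $R/I_{C_i}$. Consequently $\Ass(R/I_{C_i}^m)=\Ass(R/I_{C_i})=\{P_i\}$, so each $I_{C_i}^m$ is $P_i$-primary. Since $I_{C_i}^m$ is a monomial ideal in the variables of $C_i$ alone, contracting from $R_{P_i}$ changes nothing: if $sf\in I_{C_i}^m$ with $s\notin P_i$, primariness forces $f\in I_{C_i}^m$. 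Intersecting the equalities $I^mR_{P_i}\cap R=I_{C_i}^m$ over all $i$ gives $I^{(m)}=\bigcap_i I_{C_i}^m$, as claimed.

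I expect the main obstacle to be the primariness of the powers $I_{C_i}^m$: one must be sure that passing to powers of the complete intersection $I_{C_i}$ introduces no embedded primes, which is exactly where the regular-sequence structure (equivalently the Zariski-Samuel fact quoted in the introduction) is essential. The reconciliation of the two presentations of $I_{C_i}$ in the first part is routine once $L_3(C_i)=\emptyset$ and the sink condition are in hand.
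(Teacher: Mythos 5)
Your proof is correct, and its first half --- invoking Lemma \ref{s.v.4} to get minimality of each $C_i$, Lemma \ref{s.v.0} to get $L_3(C_i)=\emptyset$, and the sink condition to conclude $L_1(C_i)=C_i\setminus V^+$ and $L_2(C_i)=C_i\cap V^+$, hence the stated form of $I_{C_i}$ --- is exactly the paper's argument. The two proofs part ways at the symbolic power formula: the paper disposes of it in one line by citing \cite[Lemma 3.7]{cooper} (for a monomial ideal whose primary components all sit over minimal primes, the $m$-th symbolic power is the intersection of the $m$-th powers of those components), whereas you prove this fact from scratch in the case at hand. Your route --- localize the decomposition at $P_i$ to get $I^mR_{P_i}=I_{C_i}^mR_{P_i}$ (using that distinct minimal covers are incomparable, so $I_{C_j}R_{P_i}=R_{P_i}$ for $j\neq i$), then contract back using that $I_{C_i}^m$ is $P_i$-primary because $I_{C_i}$ is generated by pure powers of distinct variables and is therefore a complete intersection whose powers acquire no embedded primes --- is in essence the standard proof of the cited lemma, so nothing is lost; you trade a citation for a self-contained argument at the cost of some length. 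Two small points: for $j\neq i$ the witnessing generator of $I_{C_j}$ is $x$ itself when $x\in C_j\setminus V^+$ (it is $x^{w(x)}$ only when $x\in V^+$), though either way it is a unit in $R_{P_i}$; and the primariness of $I_{C_i}^m$ can be seen more cheaply from the standard characterization of primary monomial ideals (its generators involve only the variables of $C_i$ and include a pure power of each such variable), which avoids the associated graded ring and the Zariski--Samuel machinery altogether.
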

\begin{proof}
Here the elements of $ V^{+}  $ are sinks.	
By Lemma \ref{s.v.2}, $ I= I_{C_1}\cap \cdots \cap I_{C_s}$ be the irredundant irreducible decomposition of $ I $ where each $ I_{C_i} = ( L_1(C_i) \cup \{x_j^{w_j}~|~x_j \in L_2(C_i) \cup L_3(C_i)\} ) $, $  \rad(I_{C_i}) = P_i = (C_i)$    and $ P_i $'s are associated primes of $ I. $	By Lemma \ref{s.v.4}, $C_1,\ldots,C_s $ are minimal  vertex covers of $ D $. Thus    by Lemma \ref{s.v.0}, we have $L_3(C_i) = \phi $  for $ i=1,\ldots,s .$   For a fixed $ i ,$ if $ x\in C_i $ is a sink vertex, then  $ N_{D}^{+}(x) \cap  C_i^c = \phi, $ which implies $ x  \notin L_1(C_i)  .$ Thus $ L_2(C_i)= C_i \cap V^{+} $ and   $ L_1(C_i)= C_i \setminus V^{+} .$ Therefore $ I_{C_i} =  ( \{ x_j~|~x_j \in C_i \setminus V^{+}\} \cup \{ x_j^{w_j}~|~x_j \in C_i \cap V^{+} \})  .$ Since  $ C_i $'s are minimal vertex covers of $ D $,  the $ P_i$'s are minimal primes of   $ I .$ Hence by \cite[Lemma 3.7]{cooper}, we have  $ I^{(m)}   = {I_{C_1}}^{m}\cap \cdots \cap {I_{C_s}}^{m}. $   
\end{proof}

\begin{proposition}\label{sym.cor.1}
	Let $ I $ be the edge ideal of a weighted oriented graph $ D $ where  the elements of $ V^{+}  $ are sinks with irredundant irreducible decomposition  $ I= I_{C_1}\cap \cdots \cap I_{C_s}$ where  $  \rad(I_{C_i}) = P_i = (C_i) = (x_{i_1},\ldots,x_{i_{r_i}})$ for $ i=1,\ldots,s $ and $ C_i $'s are the strong vertex covers of $D.$ Let $ w_j = w(x_j) $ if $ x_j \in V^+. $ Then  every minimal  generator of $ I^{(m)}$ is of the form $ (\prod x_j^{a_j}| x_j \notin V^{+})( \prod {x_j^{w_j}}^{a_j}| x_j \in V^{+} ) $ for some $ a_j $'s   where  $ a_{i_1}+\cdots+a_{i_{r_i}} \geq m $ for $ i=1,\ldots,s. $

\end{proposition}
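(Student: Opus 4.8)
The plan is to reduce the statement about minimal generators of $I^{(m)}$ to a purely combinatorial condition on exponent vectors, using the irreducible decomposition $I^{(m)} = {I_{C_1}}^{m}\cap\cdots\cap{I_{C_s}}^{m}$ supplied by Theorem \ref{sym.lemma.2}. First I would observe that $I^{(m)}$ is a monomial ideal, since each $I_{C_i}$ is a monomial ideal and hence so is each ${I_{C_i}}^{m}$ and their intersection. Therefore every minimal generator of $I^{(m)}$ is a monomial. Writing an arbitrary monomial as $\prod_j x_j^{b_j}$, the goal is to translate membership $\prod_j x_j^{b_j}\in I^{(m)}$ into the inequalities $a_{i_1}+\cdots+a_{i_{r_i}}\geq m$ for all $i$, where the $a_j$ record the ``multiplicity'' of each variable after accounting for the weights.

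The key step is to understand membership in a single ${I_{C_i}}^{m}$. Recall from Theorem \ref{sym.lemma.2} that $I_{C_i}$ is generated by $\{x_j \mid x_j\in C_i\setminus V^+\}\cup\{x_j^{w_j}\mid x_j\in C_i\cap V^+\}$; this is an ideal generated by pure powers of distinct variables, so it is a complete intersection (indeed an irreducible monomial ideal of the form $(x_{i_1}^{c_{i_1}},\ldots,x_{i_{r_i}}^{c_{i_{r_i}}})$ where $c_{i_\ell}=1$ if $x_{i_\ell}\notin V^+$ and $c_{i_\ell}=w_{i_\ell}$ if $x_{i_\ell}\in V^+$). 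For such an ideal one has the clean description that a monomial $\prod_j x_j^{b_j}$ lies in ${I_{C_i}}^{m}$ if and only if $\sum_{\ell=1}^{r_i}\lceil b_{i_\ell}/c_{i_\ell}\rceil \geq m$; equivalently, setting $a_j=\lceil b_j/c_j\rceil$ (the number of times the generator $x_j^{c_j}$ must be ``used'' to cover the exponent $b_j$), the condition becomes $a_{i_1}+\cdots+a_{i_{r_i}}\geq m$. Intersecting over all $i$ then gives the stated condition simultaneously for $i=1,\ldots,s$.

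Finally I would assemble these pieces. For a \emph{minimal} generator one wants no wasted exponents: if $x_j\in V^+$ its exponent $b_j$ should be an exact multiple $w_j a_j$ of its weight, since any remainder above $w_j a_j$ could be lowered without leaving ${I_{C_i}}^{m}$ for any $i$ (because membership only sees $\lceil b_j/w_j\rceil$), contradicting minimality; for $x_j\notin V^+$ one has $c_j=1$ and $a_j=b_j$ directly. This forces every minimal generator into the asserted shape $\bigl(\prod_{x_j\notin V^+} x_j^{a_j}\bigr)\bigl(\prod_{x_j\in V^+}(x_j^{w_j})^{a_j}\bigr)$ with $a_{i_1}+\cdots+a_{i_{r_i}}\geq m$ for all $i$.

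I expect the main obstacle to be the careful justification of the single-ideal membership criterion $\prod_j x_j^{b_j}\in{I_{C_i}}^{m}\iff\sum_\ell\lceil b_{i_\ell}/c_{i_\ell}\rceil\geq m$ and, relatedly, the minimality argument that pins the $V^+$-exponents to exact multiples of the weights. The membership direction where $\sum_\ell\lceil b_{i_\ell}/c_{i_\ell}\rceil\geq m$ implies the monomial lies in the $m$-th power requires exhibiting an explicit factorization into $m$ generators of $I_{C_i}$, which is a combinatorial bookkeeping argument that one must set up cleanly; this is essentially the weighted analogue of Lemma \ref{o.s.2}, and adapting that squarefree statement to account for the weights $w_j$ is the technical heart of the proof.
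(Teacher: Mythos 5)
Your overall plan is the same as the paper's (reduce to membership in each ${I_{C_i}}^{m}$ via Theorem \ref{sym.lemma.2}, extract a numerical criterion on exponents, then use minimality to force the stated shape), but your key membership criterion is wrong: the ceilings must be floors. A pure power $x^{b}$ lies in $(x^{w})^{m}=(x^{wm})$ if and only if $b\geq wm$, i.e.\ $\lfloor b/w\rfloor\geq m$; the condition $\lceil b/w\rceil\geq m$ is strictly weaker. Concretely, take $C_i$ with $x_1\notin V^{+}$ and $x_2\in V^{+}$, $w_2=2$, so $I_{C_i}=(x_1,x_2^{2})$ and $I_{C_i}^{2}=(x_1^{2},x_1x_2^{2},x_2^{4})$: the monomial $x_2^{3}$ satisfies $\lceil 0/1\rceil+\lceil 3/2\rceil=2\geq 2$, yet $x_2^{3}\notin I_{C_i}^{2}$. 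The correct statement, which is what the paper uses, comes from Euclidean division: write the exponent of each $x_j\in V^{+}$ as $w_ja_j+b_j$ with $0\leq b_j<w_j$ (so $a_j$ is a \emph{floor}); then a product of $m$ generators of $I_{C_i}$ can use $x_j^{w_j}$ at most $a_j$ times, and the monomial lies in $I_{C_i}^{m}$ if and only if $\sum_{x_j\in C_i}a_j\geq m$.

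The error is not cosmetic, because it propagates into your minimality step and contradicts the proposition you are trying to prove. If membership in each $I_{C_i}^{m}$ depended only on $\lceil e_j/w_j\rceil$, then the smallest exponent realizing a given ceiling value $a_j\geq 1$ is $w_j(a_j-1)+1$, so minimal generators would have $V^{+}$-exponents congruent to $1$ modulo $w_j$, not exact multiples $w_ja_j$ as the proposition asserts; your own argument, carried out consistently with your criterion, lands on the wrong normal form. With floors the minimality argument goes through exactly as you sketch: dropping the remainders $b_j$ leaves all the $a_j$ (hence membership in every $I_{C_i}^{m}$, hence in $I^{(m)}$) unchanged while producing a proper divisor, so every minimal generator has $b_j=0$ for all $x_j\in V^{+}$. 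After this one repair your outline coincides with the paper's proof, including the analogy with Lemma \ref{o.s.2}.
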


\begin{proof}
Here each $I_{C_i}  $ is of the form $ I_{C_i} = ( \{ x_j~|~x_j \in C_i \setminus V^{+}\} \cup \{ x_j^{w_j}~|~x_j \in C_i \cap V^{+} \}) $ and  $  \rad(I_{C_i}) = P_i = (C_i) = (x_{i_1},\ldots,x_{i_{r_i}}).$  By Theorem \ref{sym.lemma.2}, we have $ I^{(m)}   = {I_{C_1}}^{m}\cap \cdots \cap {I_{C_s}}^{m}. $ Let $ t =  (\prod x_j^{a_j}| x_j \notin V^{+})( \prod {x_j^{w_j}}^{a_j}| x_j \in V^{+} )(\prod x_j^{b_j}| x_j \in V^{+},b_j < w_j)$  for some $ a_j $'s and $ b_j $'s. Then $ t \in I^{(m)} $ if and only if $ t \in {I_{{C_i}}}^m $ for $ i=1,\ldots,s. $ This is possible if and only if there exists at least one generator $ f_i \in {I_{C_i}}^m $ such that $ f_i $ divides $ t $ for $ i=1,\ldots,s  $ which is equivalent to the condition $ a_{i_1}+\cdots+a_{i_{r_i}} \geq m $ for $ i=1,\ldots,s. $\\
From the structure of  $ I^{(m)}   = {I_{C_1}}^{m}\cap \cdots \cap {I_{C_s}}^{m},$ it is clear that  every minimal  generator of $ I^{(m)}$ is of the form $ (\prod x_j^{a_j}| x_j \notin V^{+})( \prod {x_j^{w_j}}^{a_j}| x_j \in V^{+} ) $ for some $ a_j $'s   where  $ a_{i_1}+\cdots+a_{i_{r_i}} \geq m $ for $ i=1,\ldots,s. $    
\end{proof}

Now we relate the  symbolic powers of  edge ideals of weighted oriented graphs with the edge ideals of underlying graphs and show that both of them behave in a similar way when the  elements of $V^+$ are sinks.
\begin{notation} \label{phi}    
	   Let $ D $ be a  weighted oriented graph  where  the elements of $ V^{+}  $ are sinks and $ w_j =w(x_j) $  if $ x_j \in V^{+} .$ Here $R=k[x_1,\ldots,x_n]=$  $\displaystyle{{\bigoplus_{d=0}^{\infty}}R_d}$ is the standard graded polynomial ring. Consider the map
\begin{align*}
 \Phi : R \longrightarrow R ~ \mbox{where}  ~ x_j \longrightarrow x_j ~ \mbox{if} ~ x_j \notin V^{+}  \mbox{and} ~ x_j \longrightarrow x_j^{w_j}~  \mbox{if}  ~ x_j \in V^{+} .
\end{align*}
\end{notation}
 Here $ \Phi $ is an injective homomorphism of k-algebras.
 We use this definition of $ \Phi $ for the following theorem.
 \begin{theorem}\label{sym.theorem.1}
 Let $ D $ be a  weighted oriented graph  where   the elements of   $ V^{+}  $ are sinks and  $ G $ be the underlying graph of  $ D.$   Let $I$ and $\tilde{I}$ be the edge ideals of $G $ and  $D ,$ respectively. Then $ \Phi(I^k)  = \tilde{I}^k   $ and $  \Phi(I^{(k)})  = \tilde{I}^{(k)}$ for all $ k \geq 1. $ 	
 \end{theorem}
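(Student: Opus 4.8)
The plan is to handle the two equalities in turn and to read $\Phi(J)$ as the ideal generated by the image of $J$ (which, since $\Phi$ is a $k$-algebra homomorphism and these are monomial ideals, is generated by $\Phi$ of the generators of $J$). For the ordinary powers I would start from the base case $\Phi(I)=\tilde I$. Fix an edge $\{x_i,x_j\}\in E(G)$ and suppose its orientation in $D$ is $(x_i,x_j)\in E(D)$, so the corresponding generator of $\tilde I$ is $x_ix_j^{w_j}$. Since $(x_i,x_j)\in E(D)$ the vertex $x_i$ has an out-neighbour, hence is not a sink; as the elements of $V^{+}$ are sinks this forces $x_i\notin V^{+}$ and $\Phi(x_i)=x_i$, while $\Phi(x_j)=x_j^{w_j}$ holds whether or not $x_j\in V^{+}$ (if $x_j\notin V^{+}$ then $w_j=1$). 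Hence $\Phi(x_ix_j)=x_ix_j^{w_j}$, so $\Phi$ carries the minimal generators of $I$ bijectively onto those of $\tilde I$. Because $I^k$ is generated by the $k$-fold products of generators of $I$ and $\Phi$ is multiplicative, applying $\Phi$ gives exactly the $k$-fold products of generators of $\tilde I$, so $\Phi(I^k)=\Phi(I)^k=\tilde I^k$ for all $k$.

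For the symbolic statement I would first record the two combinatorial membership criteria. A monomial $x^a=x_1^{a_1}\cdots x_n^{a_n}$ lies in $I^{(k)}$ exactly when $\sum_{x_l\in V_j}a_l\ge k$ for every minimal vertex cover $V_j$ of $G$ (Lemmas \ref{o.s.1} and \ref{o.s.2}), while by Theorem \ref{sym.lemma.2} and Proposition \ref{sym.cor.1} the minimal generators of $\tilde I^{(k)}$ are precisely the monomials $\Phi(x^a)=\big(\prod_{x_l\notin V^{+}}x_l^{a_l}\big)\big(\prod_{x_l\in V^{+}}x_l^{w_la_l}\big)$ with $\sum_{x_l\in C_i}a_l\ge k$ for every strong vertex cover $C_i$ of $D$. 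The crux of the argument, and the step I expect to carry the weight, is to identify these two families of covers under the hypothesis that the elements of $V^{+}$ are sinks: by Lemma \ref{s.v.4} every strong cover of $D$ is a minimal vertex cover of $D$, by Remark \ref{s.v.00} the minimal vertex covers of $D$ and of $G$ coincide, and conversely any minimal vertex cover $C$ satisfies $L_3(C)=\phi$ by Lemma \ref{s.v.0}, so the defining condition of a strong cover holds vacuously and $C$ is strong. Thus the strong vertex covers of $D$ are exactly the minimal vertex covers of $G$, and the two systems of inequalities above become literally the same.

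Once the covers are identified the remainder is bookkeeping, and I would finish with the two inclusions. Since the inequalities agree, $x^a\in I^{(k)}$ if and only if $\Phi(x^a)\in\tilde I^{(k)}$. For $\Phi(I^{(k)})\subseteq\tilde I^{(k)}$, every generator of $\Phi(I^{(k)})$ is of the form $\Phi(x^a)$ with $x^a\in I^{(k)}$, hence meets the (minimal $=$ strong) cover inequalities and lies in $\tilde I^{(k)}$. For the reverse inclusion, each minimal generator of $\tilde I^{(k)}$ is, by Proposition \ref{sym.cor.1}, a monomial $\Phi(x^a)$ whose exponent vector $a$ satisfies the strong-cover inequalities, hence the minimal-cover inequalities, so $x^a\in I^{(k)}$ by Lemma \ref{o.s.2} and the generator lies in $\Phi(I^{(k)})$. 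This yields $\Phi(I^{(k)})=\tilde I^{(k)}$ for all $k\ge 1$. The only genuinely delicate point is the cover identification in the second paragraph; everything else reduces to comparing monomial exponents through the variable-wise action of $\Phi$.
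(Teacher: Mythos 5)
Your proof is correct and takes essentially the same route as the paper: both arguments identify the strong vertex covers of $D$ with the minimal vertex covers of $G$ (Lemmas \ref{s.v.4}, \ref{s.v.00}), then transport $I^{(k)}=\bigcap_i P_i^k$ to $\tilde{I}^{(k)}=\bigcap_i \tilde{I}_{C_i}^k$ through $\Phi$, using Lemma \ref{o.s.2} in one direction and Proposition \ref{sym.cor.1} in the other. If anything you are more careful than the paper, which leaves implicit both the converse cover identification (minimal implies strong, vacuously via $L_3(C)=\phi$ from Lemma \ref{s.v.0}) and the sink-hypothesis argument showing that initial vertices of edges lie outside $V^{+}$, so that $\Phi$ carries the generators of $I$ onto those of $\tilde{I}$.
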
        

\begin{proof}
Let $ V(G) = V(D) = \{x_1,\ldots,x_n\}. $
Let $ C_1,\ldots,C_s $ are the strong vertex covers of $ D $ and by  Lemma \ref{s.v.4}, they are   minimal vertex covers of $ D $. By Remark \ref{s.v.00}, $ C_1,\ldots,C_s $ are also  minimal vertex covers of $ G. $ Thus by Lemma \ref{o.s.1}, the minimal primary decomposition of $ I $ is $ I = P_1 \cap \cdots \cap P_s $ where $ P_i = (C_i) =  (x_{i_1},\ldots,x_{i_{r_i}}) $ for $ i=1,\ldots,s .$ 	By Lemma \ref{s.v.2}, $\tilde{ I}= {\tilde{I}}_{C_1}\cap \cdots \cap {\tilde{I}}_{C_s}$ be the irredundant irreducible decomposition of $ I $  where each $ {\tilde{I}}_{C_i} = ( \{ x_j~|~x_j \in C_i \setminus V^{+}\} \cup \{ x_j^{w_j}~|~x_j \in C_i \cap V^{+} \}) ,$ $ w_j = w(x_j) $ if $ x_j \in V^+ ,$   $  \rad({\tilde{I}}_{C_i}) = P_i = (C_i)$ and $ P_i $'s are minimal primes of $\tilde{I}.$  Here   $\Phi(I)  = {\tilde{I}}. $ Then $ \Phi(I^k)  = [\Phi(I) ]^k= {\tilde{I}^k}    $ for all $ k \geq 1. $\\
 Next we want to prove  that  $ \Phi(I^{(k)})  = {\tilde{I}^{(k)}} .$   By Lemma \ref{o.s.1}, $ I^{(k) } = P_1^{k} \cap \cdots \cap P_s^{k} $ and	by Theorem \ref{sym.lemma.2}, we have $ {\tilde{I}}^{(k)}   = {{\tilde{I}}_{C_1}}^{k}\cap \cdots \cap {{\tilde{I}}_{C_s}}^{k} .$  Here $\Phi(P_i)  = {\tilde{I}}_{C_i} $ for $ i=1,\ldots,s .$  
 \vspace*{0.2cm}\\
Let $ q $ be a minimal generator of $ \Phi(I^{(k)}) .$ Then there exists $ p \in I^{(k)}$ such that $ q = \Phi(p) .$ Since $ p \in I^{(k)} = P_1^{k} \cap \cdots \cap P_s^{k}  ,$ then $p \in P_i^{k} $  for each $ i=1,\ldots,s$  which implies that   $ \Phi(p) \in \Phi({P_i}^k)  = [\Phi(P_i) ]^k= {\tilde{I}_{C_i}^k} .$ Thus $ q = \Phi(p)  \in {{\tilde{I}}_{C_1}}^{k}\cap \cdots \cap {{\tilde{I}}_{C_s}}^{k}  = {\tilde{I}}^{(k)}.$ Therefore $ \Phi(I^{(k)})    \subseteq  {\tilde{I}^{(k)}} .$
\vspace*{0.2cm}\\   
Let $ q $ be  a  minimal generator of $ {\tilde{I}}^{(k)} .$ Then by Proposition \ref{sym.cor.1},  we have $   q=  (\prod x_j^{a_j}| x_j \notin V^{+})( \prod {x_j^{w_j}}^{a_j}| x_j \in V^{+} )  $ for some $ n$-tuple $ (a_1,\ldots,a_n) $  where  $ a_{i_1}+\cdots+a_{i_{r_i}} \geq k $ for $ i=1,\ldots,s. $ By letting  $ p:=x_1^{a_1}\cdots x_n^{a_n},$ we see that  $ \Phi(p) = q  $ and by Lemma    \ref{o.s.2}, $ p \in I^{(k)}$ since   $ a_{i_1}+\cdots+a_{i_{r_i}} \geq k $  for $ i=1,\ldots,s $. Thus $ q=\Phi(p) \in  \Phi(I^{(k)}) .$ Therefore $ {\tilde{I}^{(k)}}   \subseteq  \Phi(I^{(k)})  .$ Hence  $ \Phi(I^{(k)})   =  {\tilde{I}^{(k)}}      $    for all $ k \geq 1. $         
\end{proof}

Next, we apply the above theorem to some specific classes of weighted oriented graphs and give an explicit description of the symbolic powers of their edge ideals. Then we compute the Waldschmidt constant.    
\begin{corollary}
	Let $\tilde{I}$ be the edge ideal of a  weighted oriented bipartite graph $ D $ where the elements of   $~V^{+}$ are sinks and let $ G $ be the underlying graph  of $ D $ which is a bipartite graph. Then 
	$ {\tilde{I}^{(k)}} = {\tilde{I}^{k}} $ for all $ k \geq 1. $ 
	
\end{corollary}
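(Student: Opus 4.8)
The plan is to transport the whole statement down to the underlying graph $G$ by means of the homomorphism $\Phi$ of Notation \ref{phi}, and then to invoke the classical fact that bipartite graphs have normally torsion-free edge ideals. The point is that all the heavy lifting has already been done in Theorem \ref{sym.theorem.1}; the corollary should follow by combining that theorem with one known structural result about $G$.

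First I would record the key input on the underlying graph. Since $G$ is bipartite, its edge ideal $I = I(G)$ is normally torsion-free, i.e.\ $I^{(k)} = I^{k}$ for all $k \geq 1$. This is the theorem of Simis--Vasconcelos--Villarreal, which also follows from the characterization mentioned in the introduction: the incidence structure of a bipartite graph satisfies the K\"onig property, hence the max-flow min-cut property, so by \cite{gitler} the ideal $I(G)$ is normally torsion free. Either route gives the equality $I^{(k)} = I^{k}$ for every $k$.

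Next I would apply $\Phi$ to this equality. Because $\Phi$ is a well-defined $k$-algebra homomorphism, feeding it the single ideal $I^{(k)} = I^{k}$ trivially yields $\Phi\bigl(I^{(k)}\bigr) = \Phi\bigl(I^{k}\bigr)$. Now Theorem \ref{sym.theorem.1} identifies both of these images with the corresponding objects for $D$: it asserts $\Phi(I^{k}) = \tilde{I}^{k}$ and $\Phi\bigl(I^{(k)}\bigr) = \tilde{I}^{(k)}$ for all $k \geq 1$. Chaining these three identities gives $\tilde{I}^{(k)} = \tilde{I}^{k}$ for all $k \geq 1$, which is exactly the claim.

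There is no genuine obstacle here, so the only points that require care are the hypotheses rather than any calculation. Theorem \ref{sym.theorem.1} requires that $D$ be a weighted oriented graph whose elements of $V^{+}$ are sinks, and that $G$ be its underlying graph; both are assumed verbatim in the corollary, so $\Phi$ is available and behaves as stated. The one external ingredient is the bipartite normally-torsion-free property, and I would make sure to cite it in the precise form $I(G)^{(k)} = I(G)^{k}$ rather than merely as a statement about associated primes, so that the transport through $\Phi$ is immediate.
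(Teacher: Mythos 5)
Your proposal is correct and follows essentially the same route as the paper: cite the Simis--Vasconcelos--Villarreal result that bipartite graphs have normally torsion-free edge ideals, so $I(G)^{(k)} = I(G)^{k}$, and then transport this equality to $\tilde{I}$ via $\Phi$ using Theorem \ref{sym.theorem.1}. The paper's proof is exactly this two-line argument (citing \cite[Theorem 5.9]{simis}), so there is nothing to add.
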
    

\begin{proof}
	Let $ I $ be the edge ideal of $ G $ and $ \Phi $ is same as defined in  Notation \ref{phi}. Since $ G $ is a bipartite graph, we have   $ {{I}^{(k)}} = {{I}^{k}} $ for all $k \geq 1$   by \cite[Theorem 5.9]{simis}. 
	Thus by Theorem \ref{sym.theorem.1},  we have  $\Phi( {{I}^{(k)}})  =  \Phi({{I}^{k}}) ,$ i.e., $ {\tilde{I}^{(k)}} = {\tilde{I}^{k}} $ for all $k \geq 1.$  
\end{proof} 
In \cite[Theorem 6.7]{bocci2016}, C. Bocci et al. found that the Waldschmidt constant of edge ideal of an odd cycle $C_{2n+1}$ is $\displaystyle{\frac{2n+1}{n+1}}$.  
In the next Proposition, we observe that the Waldschmidt constant of edge ideal of a weighted oriented unicyclic graph depends on the position of    the  elements of $ V^{+} .$
\begin{proposition}\label{uni-sym}
Let $\tilde{I}$ be the edge ideal of a weighted oriented unicyclic graph $ D $ where the elements of $ V^{+} $ are sinks and let $ G $ be the underlying graph  of $ D $ with a unicycle $ C_{2n+1} = (x_1, \ldots , x_{2n+1} ) . $ Let $ c = (\prod x_j| x_j \in  V(C_{2n+1}) \setminus V^{+})( \prod {x_j^{w(x_j)}}| x_j \in  V(C_{2n+1}) \cap V^{+})   .$         
\begin{enumerate}  
	\item Then  $ {\tilde{I}^{(k)}} = {\tilde{I}^{k}} $ for $  1\leq k \leq n $ 
	 and  $ {\tilde{I}^{(n+1)}} = {\tilde{I}^{n+1}} +  (c). $
	 \item Let $ s \in \mathbb{N} $ and write  $ s = k(n + 1) + r $ for some $ k \in  \mathbb{Z}  $ and
	 $ 0 \leq r \leq n .$ Then   $$ \tilde{I}^{(s)} =   \displaystyle{ \sum_{t=0}^{k}  \tilde{I}^{s-t(n+1)}  (c)^t }  .$$     
	\item If none of the vertices of $ V(C_{2n+1}) $ is in   $ V^{+} $ and some vertex of $V(G) \setminus V(C_{2n+1})$ is  in   $ V^{+} ,$ then  $ \alpha{(\tilde{I}^{(s)})}$
	= $ 2s- \Bigl\lfloor\dfrac{s}{n+1}\Bigr\rfloor\qquad$ for $ s \in \mathbb{N} .$\\
	In particular, the Waldschmidt constant of $\tilde{I}$ is given by \begin{align*}\widehat{\alpha}{(\tilde{I})}=\frac{2n+1}{n+1}.\end{align*}
	
	\item If at least one of the vertices of  $ V(C_{2n+1}) $ is in $ V^{+} $, then  $ \alpha{(\tilde{I}^{(s)})}$
	= $ 2s$ for $ s \in \mathbb{N} .$\\
	In particular, the Waldschmidt constant of $\tilde{I}$ is given by \begin{align*}\widehat{\alpha}{(\tilde{I})}= 2.\end{align*}   
\end{enumerate}
\end{proposition}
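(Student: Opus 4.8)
The plan is to prove everything by reducing to the underlying graph $G$ and transporting the conclusions through the $k$-algebra map $\Phi$ of Notation \ref{phi}. The starting point is that $\Phi$ sends a monomial $\prod x_j^{a_j}$ to $\prod x_j^{a_j w_j}$ (with $w_j=1$ for $x_j\notin V^+$), so it merely scales exponents and therefore preserves and reflects divisibility. Consequently $\Phi$ carries a minimal generating set of a monomial ideal to a minimal generating set of its image, and hence commutes with sums and products of monomial ideals; in particular $\Phi\big((c_0)^t\big)=(c)^t$, where $c_0=\prod_{x_j\in V(C_{2n+1})}x_j$ is the unweighted product of the cycle variables and $c=\Phi(c_0)$ is the monomial in the statement. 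Combined with $\Phi(I^k)=\tilde I^k$ and $\Phi(I^{(k)})=\tilde I^{(k)}$ from Theorem \ref{sym.theorem.1}, it suffices to prove the corresponding identities for $I=I(G)$ and then apply $\Phi$ term by term.

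The combinatorial heart is the identity $I^{(s)}=\sum_{t=0}^{k} I^{\,s-t(n+1)}(c_0)^t$ for the unicyclic underlying graph $G$. The containment $\supseteq$ is immediate: every minimal vertex cover $C$ of $G$ meets the cycle in a vertex cover of $C_{2n+1}$, so $|C\cap V(C_{2n+1})|\ge n+1$, whence $c_0\in I^{(n+1)}$ by Lemma \ref{o.s.2}; then $I^{\,s-t(n+1)}(c_0)^t\subseteq I^{(s-t(n+1))}I^{((n+1)t)}\subseteq I^{(s)}$ using $I^{(a)}I^{(b)}\subseteq I^{(a+b)}$. For $\subseteq$ I would take a monomial generator $x^a\in I^{(s)}$, use the cover inequalities of Lemma \ref{o.s.2}, and peel off the maximal number $t$ of copies of $c_0$ for which $x^a/c_0^{\,t}$ still satisfies the inequalities of $I^{(s-t(n+1))}$, then show the remainder actually lies in the ordinary power $I^{\,s-t(n+1)}$, i.e. that its exponent vector packs $s-t(n+1)$ edges. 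The odd cycle is exactly what makes this work: the only obstruction to realizing a feasible exponent vector as a genuine product of edges is the parity of $C_{2n+1}$, and each unit of that obstruction is absorbed by one factor $c_0$, while the tree parts of $G$ are bipartite and introduce no further obstruction. Applying $\Phi$ gives $\tilde I^{(s)}=\sum_{t=0}^{k}\tilde I^{\,s-t(n+1)}(c)^t$, which is part (2); part (1) is the special cases $s=k\le n$ (forcing $t=0$, so $\tilde I^{(s)}=\tilde I^s$) and $s=n+1$ (forcing $t\in\{0,1\}$, so $\tilde I^{(n+1)}=\tilde I^{n+1}+(c)$).

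For the degree statements (3) and (4) I would argue directly from part (2). Since $\tilde I^{(s)}=\sum_{t=0}^{k}\tilde I^{\,s-t(n+1)}(c)^t$ is a sum of monomial ideals, $\alpha(\tilde I^{(s)})=\min_{0\le t\le k}\big[(s-t(n+1))\delta+t\deg(c)\big]$, where $\delta=\alpha(\tilde I)=\min_{(x_i,x_j)\in E(D)}(1+w_j)$. A key sub-lemma is that $\delta=2$ in both cases. In case (3) the cycle variables all have weight $1$, so any cycle edge already has degree $2$. In case (4) the vertices of $V^+$ on the cycle are sinks, hence form an independent set in $C_{2n+1}$ of size $p\le n$; each is the head of exactly its two incident cycle arrows, so at most $2p\le 2n<2n+1$ cycle arrows have a head in $V^+$, forcing some cycle arrow to have a weight-one head and giving $\delta=2$. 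With $\delta=2$ the quantity to minimize is $2s+t\big(\deg(c)-2(n+1)\big)$. In case (3) one has $\deg(c)=2n+1$, the coefficient $\deg(c)-2(n+1)=-1$ is negative, the minimum is at $t=k=\lfloor s/(n+1)\rfloor$, and $\alpha(\tilde I^{(s)})=2s-\lfloor s/(n+1)\rfloor$, so $\widehat{\alpha}(\tilde I)=2-\tfrac{1}{n+1}=\tfrac{2n+1}{n+1}$. In case (4) at least one cycle vertex is in $V^+$, so $\deg(c)=2n+1+\sum_{x_j\in V(C_{2n+1})\cap V^+}(w_j-1)\ge 2n+2$, the coefficient is $\ge 0$, the minimum is at $t=0$, and $\alpha(\tilde I^{(s)})=2s$, so $\widehat{\alpha}(\tilde I)=2$.

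The main obstacle is the $\subseteq$ direction of the underlying-graph identity: converting the cover inequalities of Lemma \ref{o.s.2} into an explicit factorization of $x^a$ into edges and copies of $c_0$. This is in essence a degree-constrained edge-packing statement on a graph with a single odd cycle, and making the inductive peeling of $c_0$ rigorous, in particular verifying that the attached trees never force an extra symbolic generator, is the delicate point. Once it is established, the transfer through $\Phi$ and the degree bookkeeping behind (3) and (4) are routine.
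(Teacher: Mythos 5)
Your reduction framework is exactly the paper's: transport everything through the map $\Phi$ of Notation \ref{phi} using Theorem \ref{sym.theorem.1}, establish the corresponding statements for the simple edge ideal $I=I(G)$ of the underlying unicyclic graph, and do the degree bookkeeping afterwards. The genuine gap is the step you yourself flag as ``the main obstacle'': the containment $I^{(s)}\subseteq\sum_{t=0}^{k}I^{\,s-t(n+1)}(c_0)^t$. What you offer for it is a plan (peel off copies of $c_0$ as long as the cover inequalities of Lemma \ref{o.s.2} permit, then show the remainder lies in the ordinary power), not a proof: the crucial claim that a monomial satisfying all cover inequalities, once no further copy of $c_0$ can be peeled, actually factors into the required number of edges is a nontrivial edge-packing statement, and your sketch does not address how covers mixing cycle and tree vertices constrain the exponent vector, nor why the parity of the odd cycle is the only obstruction. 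The paper does not prove this statement either; it quotes it from Gu, H{\`a}, O'Rourke and Skelton, namely \cite[Lemma 3.3]{Y.GU} for part (1), \cite[Lemma 3.4]{Y.GU} for part (2), and \cite[Theorem 3.6]{Y.GU} for the value of $\alpha(I^{(s)})$ in part (3). So your argument becomes complete the moment you replace the sketched combinatorial core by those citations; as a self-contained derivation it is incomplete precisely there.

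On the parts you do carry out, your treatment is correct and in places cleaner than the paper's. The identity $\alpha(\tilde{I}^{(s)})=\min_{0\le t\le k}\bigl[(s-t(n+1))\,\alpha(\tilde{I})+t\deg(c)\bigr]$ (valid because $\alpha$ of a sum of ideals is the minimum of the $\alpha$'s and $\alpha$ is additive on products), together with your sub-lemma that $\alpha(\tilde{I})=2$ in both cases (in case (4) because sinks on the cycle form an independent set, so some cycle edge has a weight-one head), gives (3) and (4) by a one-line minimization over $t$; the paper instead argues $\alpha(\tilde{I}^{(s)})=\alpha(I^{(s)})$ and cites \cite[Theorem 3.6]{Y.GU} for case (3), while its case (4) estimate $\alpha(\tilde{I}^{s-t(n+1)}(c)^t)\ge 2s$ is the same computation as yours. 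Your observation that $\Phi$ rescales exponents, hence preserves divisibility and commutes with sums and products of monomial ideals, is the same mechanism the paper uses when it ``applies $\Phi$ on both sides.''
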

\begin{proof}
Let $I$ be the edge ideal of $ G $ and $ \Phi $ is same as defined in  Notation \ref{phi}.
\begin{enumerate}
		\item  Since $ G $ is  a unicyclic graph with a unique odd cycle  $ C_{2n+1} = (x_1, \ldots , x_{2n+1} ) , $  by \cite[Lemma 3.3]{Y.GU}  , we have $  {{I}^{(k)}} = {{I}^{k}} $ for $  1\leq k \leq n $ 
and $ {{I}^{(n+1)}} = {{I}^{n+1}} + (x_1\cdots x_{2n+1}). $ Thus, by Theorem \ref{sym.theorem.1},  $\Phi( {{I}^{(k)}})  =  \Phi({{I}^{k}}),$ i.e., $ {\tilde{I}^{(k)}} = {\tilde{I}^{k}} $  for $  1\leq k \leq n $  and  $ \Phi({{I}^{(n+1)}}) = \Phi({{I}^{n+1}}) + \Phi(x_1\cdots x_{2n+1}) ,$ i.e., $ {\tilde{I}^{(n+1)}} = {\tilde{I}^{n+1}} +   (c)      . $  	

\item  By \cite[Lemma 3.4]{Y.GU}, we have $ {I}^{(s)} =   \displaystyle{ \sum_{t=0}^{k}  {I}^{s-t(n+1)}  (x_1 \cdots  x_{2n+1})^t  .}$ Then by Theorem \ref{sym.theorem.1}, applying $ \Phi $ on both sides, we get   $ \tilde{I}^{(s)} =   \displaystyle{ \sum_{t=0}^{k}  \tilde{I}^{s-t(n+1)}  (c)^t  .}$ 
\item Assume that none of the vertices of $ V(C_{2n+1}) $ is in $ V^{+} $. Thus by (1),  $ {\tilde{I}^{(n+1)}} = {\tilde{I}^{n+1}} +  (c) =  {\tilde{I}^{n+1}} +     (x_1\cdots x_{2n+1}). $ Since all the  edges of cycle are without any element of $ V^{+} $,    $ \alpha(\tilde{I}^{s})=\alpha (I^{s}) = 2s $ for all $ s. $ Here $ \alpha(c) =2n+1  ,$ thus we have   $ \alpha(\tilde{I}^{(n+1)})=\alpha (I^{(n+1)}) = 2n + 1 .$ Since ${I}^{(s)}  $ is generated by $ {I}  $ and $ {I}^{(n+1)}, $  $\tilde{I}^{(s)}  $ is generated by $ \tilde{I}  $ and $\tilde{I}^{(n+1)} $ which implies   $ \alpha{(\tilde{I}^{(s)})}= \alpha{({I}^{(s)})}. $ Therefore by \cite[Theorem 3.6]{Y.GU}, we have $ \alpha{(\tilde{I}^{(s)})}   $ 
$\displaystyle{ = 2s-\Bigl\lfloor\dfrac{s}{n+1}\Bigr\rfloor }. $   Hence $\displaystyle{\widehat{\alpha}{(\tilde{I})}= 2 - \frac{1}{n+1}=\frac{2n+1}{n+1}}.$    
\item   Assume that at least one of the vertices of $ V(C_{2n+1}) $ is in $ V^{+} $. By (1), we have $ \tilde{I}^{(s)} =  \tilde{I}^s + \displaystyle{ \sum_{t=1}^{k} \tilde{I}^{s-t(n+1)}  (c)^t  .}$ Since $ G $ contains a unique odd cycle, there is an edge   of odd cycle without any  element of $ V^{+} .$ Thus  $ \alpha(\tilde{I}^s)=\alpha (I^s)= 2s $ for all $ s. $ Since the cycle $ C_{2n+1} $ contains at least one  element of $ V^{+} ,$ $   \alpha(c) \geq 2(n+1) $ and $   \alpha(c^t)\geq 2t(n+1) .$ Also $ \alpha (\tilde{I}^{s-t(n+1)}) =   2s - 2t(n+1) $ and $ \alpha ({\tilde{I}}^{s-t(n+1)} (c)^t) \geq 2s - 2t(n+1) + 2t (n+1) = 2s  $ for $ 1 \leq t \leq k. $ Since $ \alpha(\tilde{I}^s) = 2s,   $   $\alpha(\tilde{I}^{(s)}) = \alpha(\tilde{I}^s) = 2s.   $   Hence $\widehat{\alpha}{(\tilde{I})}= 2.$            	  
\end{enumerate}
\end{proof}

\begin{proposition}\label{com-sym}
	Let $ \tilde{I} $ be the edge ideal of a weighted oriented complete graph $ D $  where the elements of $ V^+ $ are sinks and let $ G $ be the underlying  graph  of $ D $  on the vertices $\{x_1,\ldots,x_{n}\}.$ Then
	\begin{enumerate}
		
		\item For any $ s  \geq   n ,$ we have  $\displaystyle{\tilde{ I}^{(s)}=\sum_{\substack{(r_1,\dots,r_{n-1})~and~\\ s=r_1+2r_2+\dots +(n-1)r_{n-1}}}
			{\tilde{I}^{r_1} {{\tilde{I}}^{ {(2)} ^{r_2} } } \cdots {{\tilde{I}}^{ {(n-1)} ^ {r_{n-1}} } }  }}.$
		\item For any $ s\in \mathbb{N}$, $ \alpha{(\tilde{I}^{(s)})}$
		$\displaystyle{ = s+ \Bigl\lceil\dfrac{s}{n-2}\Bigr\rceil.\qquad}$\\
			In particular, the Waldschmidt constant of $\tilde{I}$ is given by \begin{align*}\widehat{\alpha}{(\tilde{I})} =\frac{n-1}{n-2}.\end{align*}
		
	\end{enumerate}
\end{proposition}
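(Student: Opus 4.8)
The plan is to reduce everything to the underlying complete graph $G=K_n$ via the homomorphism $\Phi$ of Notation \ref{phi} together with Theorem \ref{sym.theorem.1}, exactly as in the proof of Proposition \ref{uni-sym}. Write $I=I(G)$ for the edge ideal of $K_n$; since every pair of vertices is an edge, a vertex cover omits at most one vertex, so the minimal vertex covers of $K_n$ are precisely the $n$ sets $V(G)\setminus\{x_j\}$, each of size $n-1$. For part (1), I would start from the corresponding decomposition of the symbolic powers of $I(K_n)$ available in the literature, namely $I^{(s)}=\sum I^{r_1}(I^{(2)})^{r_2}\cdots(I^{(n-1)})^{r_{n-1}}$ with the sum over $s=r_1+2r_2+\cdots+(n-1)r_{n-1}$, valid for $s\ge n$. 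Applying $\Phi$ and invoking Theorem \ref{sym.theorem.1} gives $\tilde I^{(s)}=\Phi(I^{(s)})$ and $\tilde I^{(j)}=\Phi(I^{(j)})$ for each $j$; since $\Phi$ carries monomial ideals to monomial ideals and satisfies $\Phi(JJ')=\Phi(J)\Phi(J')$ and $\Phi(J+J')=\Phi(J)+\Phi(J')$ on their extensions, the sum/product structure is preserved term by term, yielding the displayed formula for $\tilde I^{(s)}$.

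For part (2), the crucial structural observation is that the underlying graph is complete while every element of $V^+$ is a sink: an oriented complete graph is a tournament, and a tournament has at most one sink (two out-degree-zero vertices would force the edge between them to point into both). Hence $V^+$ has at most one element, and as $V^+\ne\phi$ we get $|V^+|=1$; write $V^+=\{x_\ell\}$ with weight $w_\ell\ge 2$. By Proposition \ref{sym.cor.1}, every minimal generator of $\tilde I^{(s)}$ has the form $q=\big(\prod_{j\ne\ell}x_j^{a_j}\big)x_\ell^{w_\ell a_\ell}$, where, using that the minimal vertex covers of $K_n$ are the sets $V(G)\setminus\{x_j\}$ together with Lemma \ref{o.s.2}, the exponents satisfy $d-a_j\ge s$ for every $j$, with $d=\sum_{i=1}^n a_i$. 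The quantity to minimize is therefore $\deg q=d+(w_\ell-1)a_\ell$.

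The heart of the argument is this optimization. Summing the $n-1$ inequalities $d-a_j\ge s$ over $j\ne\ell$ and using $\sum_{j\ne\ell}a_j=d-a_\ell$ gives $(n-2)d+a_\ell\ge(n-1)s$, i.e. $d\ge\big((n-1)s-a_\ell\big)/(n-2)$; substituting into $\deg q=d+(w_\ell-1)a_\ell$ and using $w_\ell-1\ge 1/(n-2)$ (valid since $w_\ell\ge 2$ and $n\ge 3$) yields $\deg q\ge(n-1)s/(n-2)$, hence $\deg q\ge\lceil(n-1)s/(n-2)\rceil=s+\lceil s/(n-2)\rceil$. This bound is attained by taking $a_\ell=0$ and distributing $d=s+\lceil s/(n-2)\rceil$ as evenly as possible among the remaining $n-1$ exponents so that each $a_j\le d-s=\lceil s/(n-2)\rceil$; such a distribution exists because $(n-1)\lceil s/(n-2)\rceil\ge s+\lceil s/(n-2)\rceil$. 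Therefore $\alpha(\tilde I^{(s)})=s+\lceil s/(n-2)\rceil$, and
$$\widehat{\alpha}(\tilde I)=\lim_{s\to\infty}\frac{s+\lceil s/(n-2)\rceil}{s}=1+\frac{1}{n-2}=\frac{n-1}{n-2}.$$

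The main obstacle I anticipate is the sharpness of the degree estimate in part (2): one must show not merely the lower bound $\deg q\ge(n-1)s/(n-2)$ but that it is optimal to place no exponent on the unique weighted vertex $x_\ell$, which is precisely where the inequality $w_\ell\ge(n-1)/(n-2)$ is used, and one must separately check the integer feasibility of the equalizing distribution. A secondary point needing care is the bookkeeping in part (1), namely verifying that $\Phi$ genuinely commutes with the formation of sums and products of the monomial ideals $I,I^{(2)},\dots,I^{(n-1)}$ so that the complete-graph formula transfers term by term.
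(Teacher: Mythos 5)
Your proposal is correct, and part (2) takes a genuinely different route from the paper. Part (1) is essentially identical to the paper's proof: both start from the known decomposition $I^{(s)}=\sum I^{r_1}(I^{(2)})^{r_2}\cdots(I^{(n-1)})^{r_{n-1}}$ for the edge ideal of $K_n$ (the paper cites \cite[Theorem 4.2]{bidwan}) and push it through $\Phi$ via Theorem \ref{sym.theorem.1}. For part (2), however, the paper never sets up your exponent optimization: it first computes $\alpha(\tilde{I}^{(s)})=s+1$ for $1\leq s\leq n-2$ and $\alpha(\tilde{I}^{(n-1)})=n+1$ by exhibiting explicit witnesses ($x_1\cdots x_{s+1}$, respectively $x_1^2x_2^2x_3\cdots x_{n-1}$) and using that $\Phi$ cannot decrease degrees, and then, for $s\geq n$, argues that a least-degree generator of $\tilde{I}^{(s)}$ arises as a product of least-degree generators of $\tilde{I},\tilde{I}^{(2)},\ldots,\tilde{I}^{(n-2)}$, reducing to the integer program of minimizing $s+r_1+\cdots+r_{n-2}$ subject to $s=r_1+2r_2+\cdots+(n-2)r_{n-2}$. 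Your argument instead works uniformly in $s$ directly from Proposition \ref{sym.cor.1}: since the tournament structure forces $|V^+|=1$ (an observation both proofs make), every minimal generator is governed by the cover constraints $d-a_j\geq s$ for the $n$ covers $V(G)\setminus\{x_j\}$, and you minimize $d+(w_\ell-1)a_\ell$ by summing the $n-1$ inequalities with $j\neq\ell$ and checking integer feasibility of the extremal point. What your approach buys: part (2) becomes independent of part (1) (no restriction to $s\geq n$, no case split at $s=n-1$), the lower bound is a clean duality-type estimate with an explicit attainment certificate, and the precise role of the hypothesis $w_\ell\geq 2$ (via $w_\ell-1\geq\tfrac{1}{n-2}$) is made visible. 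What the paper's route buys is that it reuses the structural decomposition (1) and exhibits concrete minimal generators, at the cost of the somewhat informal step asserting that least-degree generators of $\tilde{I}^{(s)}$ come from products of least-degree generators of lower symbolic powers, which your optimization avoids entirely. Two cosmetic points: you should cite the complete-graph decomposition explicitly (it is \cite[Theorem 4.2]{bidwan}), and note that $n\geq 3$ is implicit throughout (as it must be for $n-2$ to appear in a denominator).
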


\begin{proof}
  Let $ I $ be the edge ideal of $G $  and $ \Phi $ is same as defined in  Notation \ref{phi}.
\begin{enumerate}

	  	\item By \cite[Theorem 4.2]{bidwan} for any $s\geq n,$ we have
		$$I^{(s)}=\sum_{\substack{(r_1,\dots,r_{n-1})~and~\\ s=r_1+2r_2+\dots+ (n-1)r_{n-1}}}{I^{r_1}{{I^{(2)}}^{r_2}}\cdots{{I^{(n-1)}}^{r_{n-1}}}}.$$ By Theorem \ref{sym.theorem.1}, applying $ \Phi $ on both sides we get $$\displaystyle{\tilde{ I}^{(s)}=\sum_{\substack{(r_1,\dots,r_{n-1})~and~\\ s=r_1+2r_2+\cdots +(n-1)r_{n-1}}}
			{\tilde{I}^{r_1} {{\tilde{I}}^{ {(2)} ^ {r_2} } } \cdots {{\tilde{I}}^{ {(n-1)} ^ {r_{n-1}} } }  }}.$$ 
		\item Since $ D $   is a weighted oriented complete graph   and all the elements of $ V^+ $ are sinks, there is only one  element in $ V^{+} .$ Without loss of generality, we may assume that the only element in  $ V^+ $ is $ x_n $ where  $ w_n:=w(x_n).$ For $1\leq s\leq n-1$ by the definition of $I^{(s)},$ no monomial of degree $s$ is in $I^{(s)}.$
		 Note that  for  $1\leq s\leq n-1 , $  $x_1x_2\cdots x_{s+1}\in I^{(s)}.$ Then  by Theorem \ref{sym.theorem.1},  $\Phi(x_1x_2\cdots x_{s+1}) = x_1x_2\cdots x_{s+1}\in {\tilde{ I}}^{(s)}$  for $1\leq s\leq n-2$ and  $\Phi(x_1x_2\cdots x_{n}) =   x_1x_2\cdots {x_{n}}^{w_n}\in {\tilde{ I}}^{(n-1)}.$ Since degree of any monomial can not be reduced under the map $\Phi,$  $ x_1x_2\cdots x_{s+1}$ is one of the least degree generator in both $I^{(s)}$ and  $ {\tilde{ I}}^{(s)}$ for $1\leq s\leq n-2.$ Hence   $\alpha({\tilde{ I}}^{(s)})=\alpha({{ I}}^{(s)})=s+1$ for $1\leq s\leq n-2.$   Since $ x_1\cdots x_n $ is the only minimal generator of degree $ n $ in $ I^{(n-1)} ,$ there is no minimal generator of degree $ n $ in $ {\tilde{ I}}^{(n-1)}$ as  $ \deg( x_1x_2\cdots {x_{n}}^{w_n} ) \geq n+1. $   By the definition of $ I^{(s)} ,$   $ x_1^2x_2^2\cdots {x_{n-1}} \in I^{(n-1)}.$ Then by   Theorem \ref{sym.theorem.1},  $\Phi(x_1^2x_2^2\cdots {x_{n-1}}) = x_1^2x_2^2\cdots {x_{n-1}} \in  \tilde{ I}^{(n-1)} .$ Since $ \deg(x_1^2x_2^2\cdots {x_{n-1}}) = n+1,$  $ x_1^2x_2^2\cdots {x_{n-1}}$ is one of the least degree  generator of $ {\tilde{ I}}^{(n-1)}  $   and hence $ \alpha({\tilde{ I}}^{(n-1)}) = n+1.$ Here 
		$ x_1^2x_2^2\cdots {x_{n-1}}  = (x_1x_2\cdots {x_{n-1}})(x_1x_2) \in \tilde{ I}^{(n-1)} $ where 	$  x_1x_2\cdots {x_{n-1}}$ and $x_1x_2$ are one of the least degree generators of  ${\tilde{ I}}^{(n-2)}$   and        ${\tilde{ I}}$ respectively. Therefore one of  
	the least degree generator of   $ {\tilde{ I}}^{(n-1)}  $ can be generated by the least degree generators of ${\tilde{ I}}^{(n-2)}$   and      ${\tilde{ I}}.$
	\vspace*{0.2cm}\\
		By the above argument, one of the least degree  generators of   $ {\tilde{ I}}^{(s)}  $ can be generated by the least degree  generators of ${\tilde{ I}},$ ${\tilde{ I}^{(2)}}, \ldots,   {\tilde{ I}}^{(n-2)}.$    Now from (1), by assuming $ r_{n-1} = 0, $ it follows that for $s\geq n,$ 
		$\alpha({\tilde{ I}}^{(s)})=\min\{2r_1+3r_2+\dots+(n-1)r_{n-2}\mid s=r_1+2r_2+\dots+(n-2)r_{n-2}  \}.$   Now $2r_1+3r_2+\dots+(n-1)r_{n-2}=s+r_1+r_2+\dots+r_{n-2}.$ Then it is equivalent to find the minimum of ${r=r_1+r_2\dots+r_{n-2}}$ with the condition $s=r_1+2r_2+\dots+(n-2)r_{n-2}.$ Write $s=k(n-2)+p$ for some $k\in \mathbb{Z}$ and $0\leq p\leq n-3.$ Here the minimum value of $r$ will occur for maximum value of $r_{n-2}$ and the maximum value of $r_{n-2}$ is $k.$ So the minimal generating degree term will come from ${\tilde{ I}}^{(n-2)^k}{\tilde{ I}}^{(p)}.$ Thus
		\begin{align*}
		\alpha({\tilde{ I}}^{(s)})  =  \left\{\begin{array}{ll}
		k(n-1)+(p+1)& \text{ if $p \neq 0,$} \\
		k(n-1) & \text{  if $p=0$ }\\
		\end{array} \right.
		\end{align*}
		\begin{align*}
		\alpha({\tilde{ I}}^{(s)})=\left\{\begin{array}{ll}
		s+1+ \Bigl\lfloor\dfrac{s}{n-2}\Bigr\rfloor\qquad & \text{ if $p \neq 0,$} \\
		& \\
		s+ \Bigl\lfloor\dfrac{s}{n-2}\Bigr\rfloor\qquad & \text{  if $p=0$ }\\
		\end{array} \right.
		\end{align*}  
		Therefore   $\displaystyle{ \alpha{({\tilde{ I}}^{(s)})}
		=  s+ \Bigl\lceil\dfrac{s}{n-2}\Bigr\rceil}.$
		Also $\displaystyle{\frac{s}{n-2}\leq\Bigl\lceil\dfrac{s}{n-2}\Bigr\rceil\leq \frac{s}{n-2}+1}.$ Hence \\   $$\displaystyle{\widehat{\alpha}{({\tilde{ I}})}=\lim_{s\rightarrow\infty}{} \frac{\alpha{({\tilde{ I}}^{(s)})}}{s} = 1+ \frac{1}{n-2}=\frac{n-1}{n-2}}. $$
	\end{enumerate}
\end{proof}

\begin{remark}                     
	In Corollaries \ref{unicycle} and  \ref{tournament}, we proved the equality of ordinary  and symbolic powers for weighted  oriented unicyclic graphs and complete graphs under certain orientations and weights. If   we change the orientation and weights, we may lose the equality of ordinary and symbolic powers as it was shown in  Propositions \ref{uni-sym} and \ref{com-sym}. 
		
\begin{figure}[!ht] 
		\begin{tikzpicture}
		\begin{scope}[ thick, every node/.style={sloped,allow upside down}] 
		\definecolor{ultramarine}{rgb}{0.07, 0.04, 0.56} 
		\definecolor{zaffre}{rgb}{0.0, 0.08, 0.66}

		\node at (2.8,-0.3) {$x_1$};
		\node at (4.8,-0.3) {$x_5$};
		\node at (2.5,1.7) {$x_{2}$};
		\node at (5.1,1.7) {$x_4$};
		\node at (3.9,2.7) {$x_3$};	
		\node at (2.6,0.1) {$2$};      
		\node at (5,0.1) {$2$};
		\node at (2.1,1.3) {$2$};
		\node at (5.5,1.3) {$2$};
		\node at (3.5,2.5) {$2$};

		\node at (10.4,0.1) {$1$};
		\node at (7.7,1.3) {$2$}; 
		\node at (10.9,1.3) {$2$};
		\node at (8.3,-0.3) {$x_1$};
		\node at (10.4,-0.3) {$x_5$};
		\node at (8,1.7) {$x_{2}$};
		\node at (10.5,1.7) {$x_4$};
		\node at (9.3,2.7) {$x_3$};

		\draw[fill][-,thick] (2.5,1.3) --(3.8,2.4);   
		\draw[fill](8.5,0) --node {\midarrow}(10.1,0); 
		\draw[fill] (10.1,0) --node {\midarrow}(10.6,1.3);
		\draw[fill] (8.5,0) --node {\midarrow}(8,1.3);
		\draw[fill] (9.3,2.4) --node {\midarrow}(10.6,1.3);
		\draw[fill](9.3,2.4)  --node {\midarrow}(8,1.3);   
		\draw [fill] [fill] (8.5,0) circle [radius=0.04];
		\draw [fill] [fill] (10.1,0) circle [radius=0.04];  
		\draw [fill] [fill] (10.6,1.3) circle [radius=0.04];
		\draw [fill] [fill] (9.3,2.4) circle [radius=0.04];
		\draw [fill] [fill] (8,1.3) circle [radius=0.04];
		
		\draw[fill] (4.6,0) --node {\midarrow}(3,0);
		\draw[fill]  (5.1,1.3)  --node {\midarrow}(4.6,0);
		\draw[fill] (3,0) --node {\midarrow}(2.5,1.3);
		\draw[fill] (3.8,2.4) --node {\midarrow}(5.1,1.3);
		\draw[fill] (2.5,1.3)--node {\midarrow}(3.8,2.4) ; 
		\draw [fill] [fill] (3,0) circle [radius=0.04];
		\draw [fill] [fill] (4.6,0) circle [radius=0.04];
		\draw [fill] [fill] (5.1,1.3) circle [radius=0.04];
		\draw [fill] [fill] (3.8,2.4) circle [radius=0.04];
		\draw [fill] [fill] (2.5,1.3) circle [radius=0.04];

		\node at (3.8,-0.5) {$D_1$};	
		\node at (9.3,-0.5) {$D_2$};

		\end{scope}
		\end{tikzpicture} 	\caption{Two weighted  oriented odd cycle's of length $5$.}\label{oddcycle.figure}  
	\end{figure}
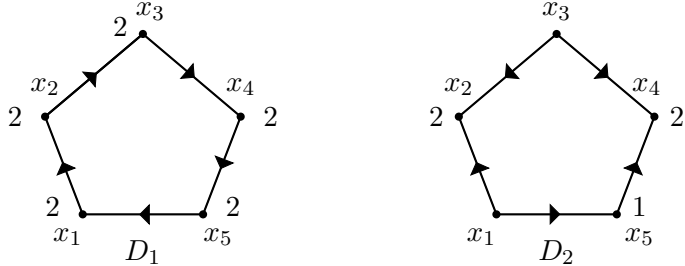
	For example consider $D_1$ and $D_2$   be the weighted oriented odd cycles as in Figure \ref{oddcycle.figure}.
Then $I(D_1) = (x_1x_2^{2},x_2x_3^{2},x_3x_4^{2},x_4x_5^{2},x_5x_1^2)$ 
and $I(D_2) = (x_1x_2^2,x_2^2x_3,x_3x_4^2,x_4^2x_5,x_1x_5).$ In $D_1,$ edges are naturally oriented and $w(x_i) = 2$ for each vertex $x_i$. In $D_2,$ vertices of $V^+(D_2)= \{x_2,x_4  \}$ are sinks.	By Corollary \ref{unicycle}, $ I(D_1)^{(s)} = I(D_1)^s $ for all $s \geq 2$ where as  by Proposition \ref{uni-sym}, $ I(D_2)^{(s)} \neq I(D_2)^s $ for all $s \geq 3$. 
\end{remark}

\begin{proposition}\label{clique-sym}   
Let $\tilde{I}$ be the edge ideal of a  weighted oriented  graph $ D $ where the elements of $ V^{+} $ are sinks and  let $ G $ be the underlying  graph  of $ D $
which  is a clique sum of two odd cycles   $C_{2n+1}=(x_1,\ldots,x_{2n+1})$ and $C_{2n+1}^{\prime} = (x_1,y_2,\ldots,y_{2n+1})$ of same length with a common vertex $x_1$.
 Let $ c = (\prod v| v \in  V(C_{2n+1}) \setminus V^{+})( \prod {v^{w(v)}}| v \in  V(C_{2n+1}) \cap V^{+}) $ and    $ c^{\prime} = (\prod v| v \in  V(C^\prime_{2n+1}) \setminus V^{+} )( \prod {v^{w(v)}}| v \in  V(C_{2n+1}^{\prime})\cap V^{+})   .$    
   
 \begin{enumerate}
	 \item Then  ${\tilde{ I}}^{(s)}={\tilde{ I}}^s$ for $1\leq s\leq n$
	and ${\tilde{ I}}^{(n+1)}= {\tilde{ I}}^{n+1} + (c) + (c^\prime).$

	\item Let $s\in \mathbb{N}$ and write $s=k(n+1)+r$ for some $k \in\mathbb{Z} $ and $0\leq r\leq n.$ Then  $${\tilde{ I}}^{(s)}=\displaystyle  {\sum_{p+q=t=0}^{k}{{\tilde{ I}}^{s-t(n+1)}(c)^p(c^\prime)^q}}.$$ 
	\item If only one of the two cycles does not contain any  element of $ V^{+} $, then for    $ s\in \mathbb{N}$, $ \alpha{({\tilde{ I}}^{(s)})}$
	$\displaystyle{= 2s- \Bigl\lfloor\dfrac{s}{n+1}\Bigr\rfloor }.$
	In particular,
	 the Waldschmidt constant of ${\tilde{ I}}$ is given by \begin{align*}\widehat{\alpha}{({\tilde{ I}})}=\frac{2n+1}{n+1}.\end{align*} 
		\item If  both  cycles    contain some element of $ V^{+} $, then for    $ s\in \mathbb{N}$, $ \alpha{({\tilde{ I}}^{(s)})}$
	$ =  2s .$
In particular, the Waldschmidt constant of ${\tilde{ I}}$ is given by $ \widehat{\alpha}{({\tilde{ I}})}=2. $

\end{enumerate} 	
\end{proposition}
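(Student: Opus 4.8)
The plan is to mirror the strategy of Proposition~\ref{uni-sym}: first settle everything for the (unweighted) underlying graph $G$, and then transport the results to $\tilde{I}$ through the injective $k$-algebra map $\Phi$ of Theorem~\ref{sym.theorem.1}, using that $\Phi(x_1\cdots x_{2n+1}) = c$ and $\Phi(x_1 y_2\cdots y_{2n+1}) = c'$. Write $I = I(G)$ and set $m = x_1x_2\cdots x_{2n+1}$, $m' = x_1 y_2\cdots y_{2n+1}$ for the two cycle monomials of $G$. The heart of the argument is to prove, for the clique sum $G$, the unweighted template
\begin{align*}
I^{(s)} &= I^s \quad (1\le s\le n), \qquad I^{(n+1)} = I^{n+1} + (m) + (m'),\\
I^{(s)} &= \sum_{p+q=t=0}^{k} I^{\,s-t(n+1)}\, m^p\, {m'}^q ,
\end{align*}
and then to apply $\Phi$ termwise.

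For the template I would work entirely through the vertex--cover membership criterion of Lemma~\ref{o.s.2}, using that every minimal vertex cover $W$ of $G$ restricts on each cycle to a vertex cover of that cycle, and that the vertex cover number of an odd cycle $C_{2n+1}$ equals $n+1$. The containment $\supseteq$ is the easy half: since $|W\cap V(C_{2n+1})|\ge n+1$ for every minimal cover $W$, Lemma~\ref{o.s.2} gives $m\in I^{(n+1)}$ and likewise $m'\in I^{(n+1)}$, whence $I^{\,s-t(n+1)}\, m^p\,{m'}^q \subseteq I^{\,s-t(n+1)} (I^{(n+1)})^{\,t} \subseteq I^{(s)}$ by the product rule $I^{(a)}I^{(b)}\subseteq I^{(a+b)}$ together with $I^{\,j}\subseteq I^{(j)}$. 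The reverse containment $\subseteq$ is the combinatorial core: given a monomial $\prod x_i^{a_i}\prod y_j^{b_j}\in I^{(s)}$, I would repeatedly peel off a factor of $m$ or of $m'$ whenever the cover inequalities force a cover-sum to exceed $n+1$ on one of the cycles, lowering $s$ by $n+1$ at each step and reducing to an ordinary power exactly as in \cite[Lemma~3.4]{Y.GU}. The only genuinely new feature is that $m$ and $m'$ share the variable $x_1$, so extracting $p$ copies of $m$ and $q$ copies of $m'$ consumes $a_1\ge p+q$; one must check that the symbolic inequalities attached to the minimal covers passing through $x_1$ supply enough copies of $x_1$ for this joint extraction.

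With the template in hand, parts (1) and (2) for $\tilde{I}$ follow immediately from Theorem~\ref{sym.theorem.1}, since $\Phi(I^{(s)}) = \tilde{I}^{(s)}$, $\Phi(I^s) = \tilde{I}^s$, $\Phi(m) = c$ and $\Phi(m') = c'$. For parts (3) and (4) I would read $\alpha$ off the explicit sum in (2). The inputs are $\alpha(\tilde{I}^{\,j}) = 2j$ (each cycle, having its $V^+$-sinks as an independent set, always contains an edge with both endpoints of weight $1$, so degree-$2$ generators exist), $\deg c = 2n+1$ precisely when $C_{2n+1}$ contains no element of $V^+$ and $\deg c \ge 2n+2$ otherwise, and symmetrically for $c'$. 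A generator in (2) has degree $2\bigl(s-t(n+1)\bigr) + p\deg c + q\deg c'$ with $p+q=t\le k$; substituting the degrees collapses this, when exactly one cycle is clean, to $2s-p$, minimized at $p=k,\ q=0$, giving $\alpha(\tilde{I}^{(s)}) = 2s-\lfloor s/(n+1)\rfloor$ and $\widehat{\alpha}(\tilde{I}) = 2-\tfrac{1}{n+1} = \tfrac{2n+1}{n+1}$; when both cycles carry a $V^+$ vertex the coefficients of $p$ and $q$ are both nonnegative, so every term has degree $\ge 2s$, the value $2s$ is already attained by $\tilde{I}^s$, and $\widehat{\alpha}(\tilde{I}) = 2$.

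The main obstacle is the reverse containment in the unweighted template, i.e.\ the peeling argument for the clique sum. Adapting \cite[Lemma~3.4]{Y.GU} from a single odd cycle to two cycles glued at $x_1$ requires carefully tracking the shared vertex so that the extracted powers of $m$ and $m'$ remain jointly admissible in the exponent of $x_1$ and the induction on $s$ does not overcount that exponent; once this bookkeeping is settled, parts (1)--(4) reduce to applying $\Phi$ and to the routine degree arithmetic sketched above.
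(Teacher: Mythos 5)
Your overall route is exactly the paper's: establish the statements for the unweighted edge ideal $I=I(G)$ of the clique sum, transport them through the map $\Phi$ of Theorem~\ref{sym.theorem.1} (which sends $x_1\cdots x_{2n+1}$ to $c$ and $x_1y_2\cdots y_{2n+1}$ to $c'$), and then read off $\alpha$ by degree arithmetic on the explicit decomposition. Your parts (3) and (4) are correct and essentially identical to the paper's: the existence of an edge with both endpoints of weight one in each odd cycle (sinks form an independent set, and an independent set in $C_{2n+1}$ has at most $n$ vertices) gives $\alpha(\tilde{I}^{\,j})=2j$, and the bounds $\deg\ge 2s-p$ (one clean cycle) and $\deg\ge 2s$ (both cycles meet $V^+$) give the two formulas. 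The paper phrases (3) slightly differently, first deducing $\alpha(\tilde{I}^{(s)})=\alpha(I^{(s)})$ and then citing \cite[Theorem 3.12]{bidwan}, but your direct computation from part (2) is equivalent.

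The genuine gap is your unweighted template. The paper does not prove it: parts (1) and (2) for $I(G)$ are quoted from \cite[Corollary 3.15]{bidwan} and \cite[Theorem 3.16]{bidwan}, which treat precisely the clique sum of two odd cycles of the same length at a common vertex. You propose instead to re-derive them by adapting the peeling argument of \cite[Lemma 3.4]{Y.GU} from a single odd cycle to two cycles sharing $x_1$, and you yourself flag the joint-extraction bookkeeping at $x_1$ as the main obstacle --- but you never resolve it. That bookkeeping is the entire mathematical content of the cited results: one must show that when the cover inequalities force $t$ total extractions, the exponent $a_1$ of the shared vertex really supports pulling out $m^p{m'}^q$ with $p+q=t$, and that the residual monomial still satisfies all cover inequalities for $I^{(s-t(n+1))}$, including those for covers passing through $x_1$ that serve both cycles simultaneously. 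As written, the containment $\tilde{I}^{(s)}\subseteq \sum_{p+q=t=0}^{k}\tilde{I}^{\,s-t(n+1)}(c)^p(c')^q$ is asserted, not proved. Either carry out that induction in detail or, as the paper does, invoke \cite[Corollary 3.15, Theorem 3.16]{bidwan}; with that in hand, the rest of your argument goes through.
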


\begin{proof}
Let $I$ be the edge ideal of $ G $ and $ \Phi $ is same as defined in  Notation \ref{phi}.

\begin{enumerate}
	\item  Since $ G $ is the clique sum of two odd cycles    of same length with a common vertex, by  \cite[Corollary 3.15]{bidwan}, we have   $ {{I}^{(k)}} = {{I}^{k}} $ for $  1\leq k \leq n $ 
	and $ {{I}^{(n+1)}} = {{I}^{n+1}} +(x_1\cdots x_{2n+1}) + (x_1y_2\cdots y_{2n+1}). $ Thus by Theorem \ref{sym.theorem.1}, $\Phi( {{I}^{(k)}})  =  \Phi({{I}^{k}}) ,$ i.e., $ {\tilde{I}^{(k)}} = {\tilde{I}^{k}} $  for $  1\leq k \leq n $  and  $ \Phi({{I}^{(n+1)}}) = \Phi({{I}^{n+1}}) + \Phi(x_1\cdots x_{2n+1}) + \Phi(x_1y_2\cdots y_{2n+1}) ,$ i.e., $ {\tilde{I}^{(n+1)}} = {\tilde{I}^{n+1}} +   (c) + (c^{\prime})      . $  	
	
	\item    Let $s\in \mathbb{N}$ and write $s=k(n+1)+r$ for some $k \in\mathbb{Z} $ and $0\leq r\leq n.$ By \cite[Theorem 3.16]{bidwan}, we have $I^{(s)}=\displaystyle  {\sum_{p+q=t=0}^{k}{I^{s-t(n+1)}(x_1\cdots x_{2n+1})^p(x_1y_2\cdots y_{2n+1})^q}}.$
	   Thus  by Theorem \ref{sym.theorem.1}, applying $ \Phi $ on both sides we get  ${\tilde{ I}}^{(s)}=\displaystyle  {\sum_{p+q=t=0}^{k}{{\tilde{ I}}^{s-t(n+1)}(c)^p(c^\prime)^q}}.$

	\item Assume only one of the both cycles does not contain any  element   of $ V^{+} .$ Without loss of generality let  $ C_{2n+1} $ be the one without any element of $ V^{+} .$   Thus by (1),    $ {\tilde{I}^{(n+1)}} = {\tilde{I}^{n+1}} +  (c) + (c^{\prime}) =  {\tilde{I}^{n+1}} +     (x_1\cdots x_{2n+1}) + (c^{\prime}). $ Since all the edges of  cycle $ C_{2n+1} $ are without any element
	 of $ V^{+} ,$   $ \alpha(\tilde{I}^{s})=\alpha (I^{s}) = 2s $ for all $ s. $      Here $ \alpha(c) =2n+1  $ and  $ \alpha(c^\prime)  \geq 2n+2  .$ Thus we have  $ \alpha(\tilde{I}^{(n+1)})=\alpha (I^{(n+1)}) = 2n+1 .$ Since ${I}^{(s)}  $ is generated by $ {I}  $ and $ {I}^{(n+1)}, $  $\tilde{I}^{(s)}  $ is generated by $ \tilde{I}  $ and $\tilde{I}^{(n+1)} $ which implies $ \alpha{(\tilde{I}^{(s)})}= \alpha{({I}^{(s)})} .$ Therefore by   \cite[Theorem 3.12]{bidwan}, we have $\displaystyle {\alpha{(\tilde{I}^{(s)})}
	 =  2s-\Bigl\lfloor\dfrac{s}{n+1}\Bigr\rfloor }. $   Hence $\displaystyle{\widehat{\alpha}{(\tilde{I})} = 2 - \frac{1}{n+1}=\frac{2n+1}{n+1}}.$    
	\item   Assume   both  cycles    contain some element of $ V^{+} .$ By (1), we have $ {\tilde{ I}}^{(s)} = {\tilde{ I}}^{s} +  \displaystyle  {\sum_{p+q=t=1}^{k}{{\tilde{ I}}^{s-t(n+1)}(c)^p(c^\prime)^q}}.$ Since $ G $ contains   odd cycles, there   exists   an edge in each odd cycle without any element of $ V^{+} .$ Thus  $ \alpha(\tilde{I}^s)=\alpha (I^s)= 2s $ for all $ s. $ Since each of the both  cycles    contain some element of $ V^{+} ,$ $   \alpha(c) \geq 2(n+1) ,$  $   \alpha(c^{\prime}) \geq 2(n+1) ,$ $   \alpha(c^p) \geq 2p(n+1) $ and $   \alpha({c^{\prime}}^q) \geq 2q(n+1) .$ Also $ \alpha (\tilde{I}^{s-t(n+1)}) = 2s - 2t(n+1) $ and $ \alpha ({\tilde{I}}^{s-t(n+1)}(c)^p(c^\prime)^q) \geq 2s - 2t(n+1) + 2p (n+1) + 2q (n+1)  = 2s  $ for  $ 1 \leq t \leq k. $ Since $ \alpha(\tilde{I}^s) = 2s,   $   $\alpha(\tilde{I}^{(s)}) = \alpha(\tilde{I}^s) = 2s.   $  Hence   $\widehat{\alpha}{(\tilde{I})}=   2.$     	  
\end{enumerate}    \end{proof}
\section{Regularity in weighted oriented odd cycle}
In this section, we focus on  weighted oriented odd cycles where the elements of $V^+$ are sinks and we set $ V^+ \neq \phi. $ We  first show that $\reg \tilde{I}^{(s)}  \leq \reg \tilde{I}^s$ and then we explicitly compute $\reg ( \tilde{I}^{(s)} /\tilde{I}^{s})$. As a Corollary, we conclude that  the equality  $\reg \tilde{I}^{(s)}  = \reg \tilde{I}^s$  occurs if  $V^+$ has   only one  element.     
\begin{lemma} \label{maximal.3}    
	Let $\tilde{I}$ be the edge ideal of a  weighted oriented cycle $ D $ where the elements of $ V^{+} $ are sinks and let $ G $ be the underlying graph  of $ D $ which is the cycle $ C_{2n+1} = (x_1, \ldots , x_{2n+1} ).$   Let $ w_i = w(x_i)  $ where $ x_i \in V^+ ,$  $ w_v := \max\{ w_i ~|~x_i \in V^+  \}  $ and $\displaystyle{ w = \sum_{x_i \in V^+  }(w_i - 1)        .      }$ For $ s \in \mathbb{N} ,$ let  $ s = k(n + 1) + r $ for some $ k \in  \mathbb{Z}  $ and
	$ 0 \leq r \leq n. $  Then   $\m^{(t-1)w_v + w } (c)^t \nsubseteq   \tilde{I}^{t(n+1)}   $ for   $ 1 \leq t \leq k $  where $ c = (\prod x_j| x_j \notin V^{+})( \prod {x_j^{w_j}}| x_j \in V^{+})  $  and $ \m = (x_1, \ldots , x_{2n+1}) $ is the maximal homogeneous ideal.    
	
\end{lemma}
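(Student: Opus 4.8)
The plan is to reduce membership in $\tilde I^{t(n+1)}$ to an integer feasibility problem on the cycle and then exhibit a single monomial of $\m^{(t-1)w_v+w}(c)^t$ that violates it. Recall that $\tilde I$ is generated by the edge monomials $x_px_q^{w_q}$ of the arrows $(x_p,x_q)$, and that since every vertex of $V^+$ is a sink, each $x_i\in V^+$ occurs only as a terminal vertex, hence always to the power $w_i$. Consequently a monomial $x_1^{a_1}\cdots x_{2n+1}^{a_{2n+1}}$ lies in $\tilde I^{N}$ with $N=t(n+1)$ if and only if there exist integers $b_1,\dots,b_{2n+1}\ge 0$ with $\sum_i b_i=N$, where $b_i$ records how often the edge $\{x_i,x_{i+1}\}$ is used, satisfying at each vertex $x_i$ the capacity constraint $b_{i-1}+b_i\le a_i$ if $x_i\notin V^+$ and $w_i(b_{i-1}+b_i)\le a_i$ if $x_i\in V^+$, indices read cyclically modulo $2n+1$.

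Setting $c_i:=\floor{a_i/w_i}$ (so $c_i=a_i$ when $x_i\notin V^+$), each of these constraints is equivalent to $b_{i-1}+b_i\le c_i$. Summing all $2n+1$ cyclic inequalities and using that each $b_i$ occurs in exactly two of them gives the necessary condition $2N\le\sum_i c_i$ for membership. Hence it suffices to produce a monomial of $\m^{(t-1)w_v+w}(c)^t$ whose exponent vector forces $\sum_i c_i<2N=2t(n+1)$.

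Let $x_v\in V^+$ be a vertex of maximal weight $w_v$. I would take
\[
M \;=\; c^{\,t}\cdot\Bigl(\prod_{x_i\in V^+}x_i^{\,w_i-1}\Bigr)\cdot x_v^{(t-1)w_v},
\]
so that the multiplier has degree $\sum_{x_i\in V^+}(w_i-1)+(t-1)w_v=w+(t-1)w_v$ and therefore lies in $\m^{(t-1)w_v+w}$, giving $M\in\m^{(t-1)w_v+w}(c)^t$. A direct computation of the exponents $a_i$ of $M$ then yields $c_i=t$ for $x_i\notin V^+$, $c_i=t$ for $x_i\in V^+\setminus\{x_v\}$ (the floor absorbs the surplus $w_i-1$), and $c_v=2t-1$. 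Summing these over the $2n+1$ vertices gives $\sum_i c_i=2t(n+1)-1<2t(n+1)=2N$, so by the previous paragraph no admissible $(b_i)$ exists and $M\notin\tilde I^{t(n+1)}$. Since $M\in\m^{(t-1)w_v+w}(c)^t$, this proves $\m^{(t-1)w_v+w}(c)^t\nsubseteq\tilde I^{t(n+1)}$ for every $1\le t\le k$.

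The main point to get right is that a crude degree comparison fails: already for $t=1$ the monomial $M$ has degree exceeding the minimal generating degree $2(n+1)$ of $\tilde I^{(n+1)}$, so one cannot conclude by counting degrees. The saving comes entirely from the floor in the weighted capacities $c_i=\floor{a_i/w_i}$, which lets one waste up to $w_i-1$ units of exponent at each sink for free and an additional $w_v$ units per unit of $\floor{\cdot}$ at $x_v$; the exponent $(t-1)w_v+w$ is precisely the largest surplus that keeps $\sum_i c_i$ one short of $2t(n+1)$. Thus the delicate step is the bookkeeping that pins $\sum_i c_i$ to exactly $2t(n+1)-1$ for this particular multiplier, together with the observation that the reduction to the cyclic capacity system is valid exactly because the elements of $V^+$ are sinks.
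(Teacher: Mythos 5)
Your proof is correct, and it hinges on exactly the same witness monomial as the paper's: $g = x_v^{(t-1)w_v}\bigl(\prod_{x_i\in V^+}x_i^{w_i-1}\bigr)c^t$. Where you genuinely differ is in how non-membership in $\tilde{I}^{t(n+1)}$ is certified. The paper argues directly with factorizations: it notes that $c$ times a single factor $x_j$ (for $x_j\notin V^+$) or $x_j^{w_j}$ (for $x_j\in V^+$) is exactly a product of $n+1$ edge generators, and then asserts, by bookkeeping on how many full powers $x_j^{w_j}$ the multiplier $x_v^{(t-1)w_v}\prod_{x_i\in V^+}x_i^{w_i-1}$ can supply, that $g$ cannot be divisible by any product of $t(n+1)$ generators. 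You instead encode membership as an integer feasibility problem --- edge multiplicities $b_i\ge 0$ with $\sum_i b_i = t(n+1)$ subject to the cyclic capacities $b_{i-1}+b_i\le \lfloor a_i/w_i\rfloor$, a reduction valid precisely because the vertices of $V^+$ are sinks so that both edges at such a vertex carry the full power $w_i$ --- and you kill feasibility by summing the $2n+1$ constraints to get the necessary condition $2t(n+1)\le \sum_i\lfloor a_i/w_i\rfloor$, which your witness violates by exactly one. This buys rigor: the paper's final step (``is not a multiple of the product of any $t(n+1)$ minimal generators'') is asserted rather than fully proved, whereas your summed-capacity bound is a complete and checkable verification, and it isolates a reusable necessary criterion for membership in powers of edge ideals of weighted oriented cycles; your closing remark that a crude degree count cannot work is also accurate. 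What the paper's route buys is economy --- no feasibility setup, and the matching structure of the odd cycle (each vertex-deleted path has a perfect matching) is made visible as the combinatorial engine.
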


\begin{proof} 
	Here   $ c = (\prod x_j| x_j \notin V^{+})( \prod {x_j^{w_j}}| x_j \in V^{+})   $  and $ x_v $ be an  element of $ V^{+}  $   with maximum weight. Here $$\tilde{I}  =    (x_ix_j^{w_j}~ \mbox{if} ~ x_j \in V^{+} ~ \mbox{or} ~ x_ix_j ~ \mbox{if} ~ x_i, x_j \notin V^{+}~|~\{x_i,x_j\} \in E(C_{2n+1})) .$$     
	Let  $ \displaystyle{ f = \prod_{ 
			 x_j \in V^{+}} {x_j^{(w_j  - 1)}}}.$ Then $ \deg(f) = w  $ and  $ f  $ is a minimal generator of $\m^w .$  Let  $ g    = {x_v}^{(t-1)w_v } f c^{t}$ for some $t$ where   $ 1 \leq t \leq k .$ Then  $  g   \in    \m^{(t-1)w_v }  \m^w(c)^t = \m^{(t-1)w_v + w } (c)^t   . $  We want to prove that  $ g     \notin  \tilde{I}^{t(n+1)}    .$

	  Note that  $ c $ can be expressed as the product of $ n $ minimal generators of $ \tilde{I} $ and any $ x_i $ where $x_i \notin V^+ $ or  the product of $ n $ minimal generators of $ \tilde{I} $ and any $ x_i^{w_i}$ where $x_i \in V^+ .$ So the product of $ c $ only with any $ x_j^{w_j} $ for some $ x_j \in V^+ $  or   $ x_j $ for some $ x_j \notin V^+ $ is exactly the product  of  $ n+1 $ minimal generators of $ \tilde{I}  .$  Hence the product of $ c^t $ only with $t$ number of $ x_j^{w_j} $ for some $ x_j \in V^+ $  or   $ x_j $ for some $ x_j \notin V^+ $ is exactly the  product  of  $ t(n+1) $ minimal generators of $ \tilde{I}  .$ Since $ \displaystyle{ f = \prod_{ 
	  		x_j \in V^{+}} {x_j^{(w_j  - 1)}}}$  is neither   a multiple of $ x_j^{w_j} $ for some $ x_j \in V^+ $  nor   $ x_j $ for some $ x_j \notin V^+ ,$   $  {x_v}^{(t-1)w_v } f $ can not be a multiple of product of $t$ number of $ x_j^{w_j} $ for some $ x_j \in V^+ $  or   $ x_j $ for some $ x_j \notin V^+ $. Note that  $  {x_v}^{(t-1)w_v } f $ is a multiple of product of only $t-1$ number of $ x_j^{w_j} $ for  $ x_j=x_v \in V^+ .$ Therefore  ${x_v}^{(t-1)w_v } f c^{t}$ is not a multiple of product  of any  $ t(n+1) $ minimal generators of $ \tilde{I}  .$ So $ g  = {x_v}^{(t-1)w_v } f c^{t}  \notin  \tilde{I}^{t(n+1)}    .$ Hence    $\m^{(t-1)w_v + w } (c)^t \nsubseteq   \tilde{I}^{t(n+1)}   $ for   $ 1 \leq t \leq k $.   
\end{proof} 

\begin{lemma} \label{maximal.1}   
	Let $\tilde{I}$ be the edge ideal of a  weighted oriented cycle $ D $ where the elements of $ V^{+} $ are sinks and let $ G $ be the underlying graph  of $ D $ which is the cycle $ C_{2n+1} = (x_1, \ldots , x_{2n+1} ).$   Let $ w_i = w(x_i)  $ where $ x_i \in V^+ ,$  $ w_v := \max\{ w_i ~|~x_i \in V^+  \}  $ and $\displaystyle{ w = \sum_{x_i \in V^+  }(w_i - 1)        .      }$ For $ s \in \mathbb{N} ,$ let  $ s = k(n + 1) + r $ for some $ k \in  \mathbb{Z}  $ and
$ 0 \leq r \leq n. $  Then   $\m^{(k-1)w_v + w + 1} \tilde{I}^{(s)} \subseteq  \tilde{I}^{s}  $ where $ \m = (x_1, \ldots , x_{2n+1}) $ is the maximal homogeneous ideal.        
	
\end{lemma}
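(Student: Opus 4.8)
The plan is to reduce the statement to a containment about a single power of $c$ and then settle that by a counting argument. By Proposition \ref{uni-sym}(2) applied to the odd cycle $C_{2n+1}$ we have $\tilde{I}^{(s)} = \sum_{t=0}^{k} \tilde{I}^{s-t(n+1)}(c)^t$, so it suffices to show $\m^{(k-1)w_v + w + 1}\,\tilde{I}^{s-t(n+1)}(c)^t \subseteq \tilde{I}^s$ for each $t$ with $0 \le t \le k$. The case $t = 0$ is immediate since $\tilde{I}^s$ is an ideal, and for $t \ge 1$ it is enough to prove $\m^{(k-1)w_v + w + 1}(c)^t \subseteq \tilde{I}^{t(n+1)}$ and then multiply by $\tilde{I}^{s-t(n+1)}$. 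Because $k \ge t$ we have $(k-1)w_v + w + 1 \ge (t-1)w_v + w + 1$, so the whole statement follows once I establish the containment
\[
\m^{(t-1)w_v + w + 1}(c)^t \subseteq \tilde{I}^{t(n+1)} \quad \text{for } 1 \le t \le k,
\]
which I denote by $(\ast)$.

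For $(\ast)$ I would invoke the combinatorial fact isolated in the proof of Lemma \ref{maximal.3}: for every vertex $x_\ell$ the product of $c$ with the single completing factor $x_\ell^{w_\ell}$ (which is just $x_\ell$ when $x_\ell \notin V^+$) is a product of exactly $n+1$ minimal generators of $\tilde{I}$, hence $c\,x_\ell^{w_\ell} \in \tilde{I}^{n+1}$. Consequently, for any choice of $t$ vertices $x_{\ell_1},\ldots,x_{\ell_t}$ with repetition allowed, $c^t \prod_{i=1}^{t} x_{\ell_i}^{w_{\ell_i}} = \prod_{i=1}^{t}\bigl(c\,x_{\ell_i}^{w_{\ell_i}}\bigr) \in \tilde{I}^{t(n+1)}$. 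Since $\m^{(t-1)w_v+w+1}(c)^t$ is generated by the monomials $x^\alpha c^t$ with $\deg(x^\alpha) = (t-1)w_v + w + 1$, the containment $(\ast)$ reduces to the purely numerical claim that every such $x^\alpha$ is divisible by a product $\prod_{i=1}^{t} x_{\ell_i}^{w_{\ell_i}}$ of $t$ completing factors; writing $x^\alpha = \bigl(\prod_i x_{\ell_i}^{w_{\ell_i}}\bigr)x^{\alpha'}$ then gives $x^\alpha c^t = x^{\alpha'}\bigl(c^t\prod_i x_{\ell_i}^{w_{\ell_i}}\bigr) \in \tilde{I}^{t(n+1)}$.

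To prove the numerical claim, write $\alpha = (a_1,\ldots,a_{2n+1})$; the number of completing factors dividing $x^\alpha$ is exactly $\sum_{\ell} \lfloor a_\ell / w_\ell \rfloor$, so I must show this sum is $\ge t$. Writing $a_\ell = w_\ell \lfloor a_\ell/w_\ell\rfloor + r_\ell$ with $0 \le r_\ell \le w_\ell - 1$, and noting $r_\ell = 0$ unless $x_\ell \in V^+$, I would multiply through by $w_v$ and use $w_v \ge w_\ell$ to obtain
\[
 w_v \sum_\ell \Bigl\lfloor \tfrac{a_\ell}{w_\ell} \Bigr\rfloor \ge \sum_\ell w_\ell \Bigl\lfloor \tfrac{a_\ell}{w_\ell} \Bigr\rfloor = \Bigl(\sum_\ell a_\ell\Bigr) - \sum_\ell r_\ell \ge \bigl((t-1)w_v + w + 1\bigr) - w,
\]
where the last step uses $\sum_\ell r_\ell \le \sum_{x_\ell \in V^+}(w_\ell - 1) = w$. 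Dividing by $w_v$ yields $\sum_\ell \lfloor a_\ell/w_\ell\rfloor \ge (t-1) + 1/w_v > t-1$, and since the left-hand side is an integer it is $\ge t$, as needed.

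The main obstacle is not this counting step but the geometric input $c\,x_\ell^{w_\ell}\in \tilde{I}^{n+1}$ for an arbitrary vertex $x_\ell$: it rests on exhibiting, for each $\ell$, a set of $n+1$ edges of the odd cycle covering $x_\ell$ twice and every other vertex exactly once, so that (using that the elements of $V^+$ are sinks) the product of the corresponding generators equals $c\,x_\ell^{w_\ell}$. This is exactly the mechanism established in Lemma \ref{maximal.3}, which I may cite. It is worth noting that the bound in $(\ast)$ is sharp in the sense made precise by Lemma \ref{maximal.3}, namely $\m^{(t-1)w_v+w}(c)^t \nsubseteq \tilde{I}^{t(n+1)}$, matching the observation that the monomial $x_v^{(t-1)w_v}\bigl(\prod_{x_j\in V^+} x_j^{w_j-1}\bigr)$ of degree $(t-1)w_v + w$ supplies only $t-1$ completing factors.
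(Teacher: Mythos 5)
Your proposal is correct, and its skeleton matches the paper's: the same decomposition $\tilde{I}^{(s)} = \sum_{t=0}^{k}\tilde{I}^{s-t(n+1)}(c)^t$ from Proposition \ref{uni-sym}, the same reduction (using $k\ge t$) to the core containment $\m^{(t-1)w_v+w+1}(c)^t \subseteq \tilde{I}^{t(n+1)}$, and the same combinatorial input, namely that $c$ times a single completing factor $x_\ell^{w_\ell}$ is exactly a product of $n+1$ minimal generators of $\tilde{I}$. Where you genuinely diverge is in how the core containment is established. The paper argues iteratively: it first shows $\m^{w+1}(c)\subseteq \tilde{I}^{n+1}$ by pigeonhole, then proves the absorption statement $\m^{w_v}\m^{w+1}(c)\subseteq \m^{w+1}\tilde{I}^{n+1}$ (via the factorization $t_1=t_2t_4t_5$ and a degree count on $t_5=t_3/t_4$), and peels off one copy of $c$ at a time through a chain of inclusions. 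You prove it in one shot by a counting argument: a monomial $x^\alpha$ of degree $(t-1)w_v+w+1$ admits at least $t$ disjoint completing factors, because $w_v\sum_\ell\lfloor a_\ell/w_\ell\rfloor \ge \sum_\ell a_\ell - \sum_\ell r_\ell \ge (t-1)w_v+1$, and integrality forces $\sum_\ell\lfloor a_\ell/w_\ell\rfloor\ge t$. Each step of this checks out, including the identification of the maximal number of completing factors dividing $x^\alpha$ with $\sum_\ell\lfloor a_\ell/w_\ell\rfloor$ and the bound $\sum_\ell r_\ell\le w$. Your version is more quantitative and arguably cleaner: it makes the sharpness of the exponent transparent (one sees exactly why degree $(t-1)w_v+w$ yields only $t-1$ completing factors, dovetailing with the non-containment of Lemma \ref{maximal.3}) and avoids the bookkeeping of the paper's absorption step. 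The paper's iterative formulation has the minor practical advantage that its intermediate inclusion $\m^{(t-1)w_v+w+1}(c)^t\subseteq \tilde{I}^{t(n+1)}$ is later quoted verbatim in the proof of Theorem \ref{maximal.4}; your argument delivers the same intermediate statement, so nothing downstream is lost.
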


\begin{proof} 
	Let   $ c = (\prod x_j| x_j \notin V^{+})( \prod {x_j^{w_j}}| x_j \in V^{+})   .$  Let  $ x_v $ be a vertex of $ V^{+} $  with maximum weight. Here $$\tilde{I}  =    (x_ix_j^{w_j}~ \mbox{if} ~ x_j \in V^{+} ~ \mbox{or} ~ x_ix_j ~ \mbox{if} ~ x_i, x_j \notin V^{+}~|~\{x_i,x_j\} \in E(C_{2n+1})) .$$ 
By Proposition \ref{uni-sym}, we have $ \tilde{I}^{(s)} =  \tilde{I}^s + \displaystyle{ \sum_{t=1}^{k} \tilde{I}^{s-t(n+1)}  (c)^t  .}$ Since $\displaystyle{ w+1 = \sum_{x_i \in V^+  }(w_i - 1)}+1,$ any   element  of $ \m^{w+1} $ can be viewed as a multiple of  $ x_j^{w_j} $ for some $ x_j \in V^+ $  or a multiple of $ x_j $ for some $ x_j \notin V^+ .$ Note that the product of $ c $ only with any $ x_j^{w_j} $ for some $ x_j \in V^+ $  or   $ x_j $ for some $ x_j \notin V^+ $ is exactly the  product  of  $ n+1 $ minimal generators of $ \tilde{I}  .$ Thus any element of $ \m^{w+1} (c)  $ can be represented as a multiple of product of   $ n+1 $ minimal generators of $ \tilde{I}  $ and hence $ \m^{w+1} (c) \subseteq \tilde{I}^{n+1} .     $ First we want to show that $    \m^{w_v}  \m^ {w + 1} (c)    \subseteq \m^{w+1}\tilde{I}^{n+1}.$
\vspace*{0.2cm}\\   
	Let $ t_1 \in     \m^{w_v}  \m^ {w + 1} (c)  $ such that  $ t_1 = t_2 t_3 $ where $ t_2 \in \m^{w_v}  $ and $ t_3 \in   \m^ {w + 1} (c) . $ Since $  \m^ {w + 1} (c) \subseteq \tilde{I}^{n+1}, $ $ t_3 $ is  a   multiple of   product  of $ n+1 $ minimal generators of $ \tilde{I}  .$ 
	Let the product of those $ n+1 $ minimal generators of $ \tilde{I}  $ is $ t_4 ,$ i.e., $ t_4 \in  \tilde{I}^{n+1}.$  Then $ t_5 = t_3/t_4 $ is a monomial of degree at least $\displaystyle{ \sum_{x_i \in V^+ \setminus \{x_v\} }(w_i - 1)  }   .$
	Here $ t_2 t_5 \in \m^{w+1} $ because  $ w_v + \displaystyle{ \sum_{x_i \in V^+ \setminus \{x_v\} }(w_i - 1)  }  =   w+1  .$ So $ t_1 = t_2 t_3 = t_2 t_4 t_5 = (t_2 t_5) t_4  \in \m^{w+1}\tilde{I}^{n+1}.$   \mbox{Thus} $    \m^{w_v}  \m^ {w + 1} (c)    \subseteq \m^{w+1}\tilde{I}^{n+1}.$   Hence      
\begin{align*}
  \m^{(t-1)w_v + w + 1} (c)^{t} &=     \m^{(t-1)w_v} (c)^{t-1} \m^ {w + 1} (c)\\  
  &= \m^{(t-2)w_v} (c)^{t-1} \m^{w_v}   \m^ {w + 1}  (c)\\
&\subseteq \m^{(t-2)w_v} (c)^{t-1}   \m^ {w + 1}  \tilde{I}^{n+1}\\      
&= \m^{(t-3)w_v} (c)^{t-2} \m^{w_v}  \m^ {w + 1}  (c) \tilde{I}^{n+1}\\ 
&\subseteq \m^{(t-3)w_v} (c)^{t-2}   \m^ {w + 1}  \tilde{I}^{2(n+1)}\\
&~~~~\vdots\\
& \subseteq     \m^ {w + 1} (c) \tilde{I}^{(t-1)(n+1)}\\
&\subseteq \tilde{I}^{n+1} \tilde{I}^{(t-1)(n+1)}\\&=\tilde{I}^{t(n+1)}.  
\end{align*}     
Therefore
 \begin{align*}
   \m^{(k-1)w_v + w + 1} \tilde{I}^{s-t(n+1)} (c)^t  &= \m^{(k-t)w_v} \tilde{I}^{s-t(n+1)} \m^{(t-1)w_v + w + 1}   (c)^t \\   
     &\subseteq      \m^{(k-t)w_v} \tilde{I}^{s-t(n+1)} \tilde{I}^{t(n+1)}\\ 
     &\subseteq  \m^{(k-t)w_v} \tilde{I}^{s}\\
      &\subseteq \tilde{I}^{s}  ~    \mbox{for}    ~  1 \leq t \leq k .  
 \end{align*}
  Hence $\m^{(k-1)w_v + w + 1} \tilde{I}^{(s)} = \m^{(k-1)w_v + w + 1}(    \tilde{I}^s + \displaystyle{ \sum_{t=1}^{k} \tilde{I}^{s-t(n+1)}  (c)^t    } )  \subseteq   \tilde{I}^{s}.   $ 
\end{proof}

\begin{theorem}\label{maximal.2} 
	Let $\tilde{I}$ be the edge ideal of a  weighted oriented cycle $ D $ where the elements of $ V^{+} $ are sinks and let $ G $ be the underlying graph  of $ D $ which is the cycle $ C_{2n+1} = (x_1, \ldots , x_{2n+1} ).$   Let $ w_i = w(x_i)  $ where $ x_i \in V^+ ,$  $ w_v := \max\{ w_i ~|~x_i \in V^+  \}  $ and $\displaystyle{ w = \sum_{x_i \in V^+  }(w_i - 1).}$ For $ s \in \mathbb{N} ,$ let  $ s = k(n + 1) + r $ for some $ k \in  \mathbb{Z}  $ and
$ 0 \leq r \leq n. $   Then     $\reg \tilde{I}^{(s)}  \leq \reg \tilde{I}^s.$
\end{theorem}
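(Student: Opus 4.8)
The plan is to exploit the fact that, by Lemma \ref{maximal.1}, the two ideals $\tilde I^{s}$ and $\tilde I^{(s)}$ differ only by a module of finite length, and then to compare their Castelnuovo--Mumford regularities through the long exact sequence in local cohomology. Since the standard containment $\tilde I^{s}\subseteq \tilde I^{(s)}$ holds, I would start from the short exact sequence of graded $R$-modules
$$0\longrightarrow \tilde I^{s}\longrightarrow \tilde I^{(s)}\longrightarrow Q\longrightarrow 0,\qquad Q:=\tilde I^{(s)}/\tilde I^{s}.$$
By Lemma \ref{maximal.1} we have $\m^{(k-1)w_v+w+1}\,\tilde I^{(s)}\subseteq \tilde I^{s}$, so that $\m^{(k-1)w_v+w+1}Q=0$; hence $Q$ is annihilated by a power of $\m$ and is therefore a module of finite length. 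Consequently $H^{0}_{\m}(Q)=Q$ and $H^{i}_{\m}(Q)=0$ for every $i\geq 1$.

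Next I would read off the comparison of the invariants $a_i$ from the induced long exact sequence in local cohomology
$$\cdots\to H^{i-1}_{\m}(Q)\to H^{i}_{\m}(\tilde I^{s})\to H^{i}_{\m}(\tilde I^{(s)})\to H^{i}_{\m}(Q)\to\cdots.$$
For $i\geq 2$ both $H^{i-1}_{\m}(Q)$ and $H^{i}_{\m}(Q)$ vanish, so $H^{i}_{\m}(\tilde I^{s})\cong H^{i}_{\m}(\tilde I^{(s)})$ and thus $a_i(\tilde I^{(s)})=a_i(\tilde I^{s})$. For $i=1$ the sequence yields a surjection $H^{1}_{\m}(\tilde I^{s})\twoheadrightarrow H^{1}_{\m}(\tilde I^{(s)})$ (because $H^{1}_{\m}(Q)=0$); since the connecting maps are graded of degree zero, comparing graded pieces gives $a_1(\tilde I^{(s)})\leq a_1(\tilde I^{s})$. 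Finally, as $R$ is a domain and both $\tilde I^{s}$ and $\tilde I^{(s)}$ are nonzero ideals, they are torsion-free and have positive depth, so $H^{0}_{\m}(\tilde I^{s})=H^{0}_{\m}(\tilde I^{(s)})=0$ and the $i=0$ term contributes $-\infty$ to both regularities.

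Putting these together, $a_i(\tilde I^{(s)})+i\leq a_i(\tilde I^{s})+i$ for every $i$, and taking the maximum over $i$ in the definition $\reg(-)=\max_i\{a_i(-)+i\}$ yields the desired inequality $\reg \tilde I^{(s)}\leq \reg \tilde I^{s}$. The substantive input is entirely contained in Lemma \ref{maximal.1}: once the finite length of $Q$ is established, the regularity comparison is a formal consequence of the local cohomology long exact sequence. The only point demanding a little care is the $i=1$ step, where one obtains an inequality rather than an equality of the $a_1$ invariants, reflecting the surjectivity (not bijectivity) of the maps $H^{1}_{\m}(\tilde I^{s})\to H^{1}_{\m}(\tilde I^{(s)})$. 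Note that Lemma \ref{maximal.3} is not needed for this inequality; I would expect it to be reserved for the subsequent exact evaluation of $\reg(\tilde I^{(s)}/\tilde I^{s})$ and the resulting equality case.
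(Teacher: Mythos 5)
Your proposal is correct and takes essentially the same route as the paper: the sole substantive input in both is Lemma \ref{maximal.1}, which makes $\tilde{I}^{(s)}/\tilde{I}^{s}$ a finite-length (Artinian) module, after which the inequality follows formally from the local cohomology long exact sequence. The only cosmetic difference is that the paper applies this to the short exact sequence $0\rightarrow \tilde{I}^{(s)}/\tilde{I}^{s}\rightarrow R/\tilde{I}^{s}\rightarrow R/\tilde{I}^{(s)}\rightarrow 0$ of quotient modules (getting isomorphisms in degrees $i\geq 1$ and a surjection at $i=0$), whereas you use the sequence of ideals $0\rightarrow \tilde{I}^{s}\rightarrow \tilde{I}^{(s)}\rightarrow \tilde{I}^{(s)}/\tilde{I}^{s}\rightarrow 0$; the two are equivalent and yield the same conclusion.
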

\begin{proof}
Let    $ \m = (x_1, \ldots , x_{2n+1}) $ is the maximal homogeneous ideal.	By Lemma \ref{maximal.1}, we have $\m^{(k-1)w_v + w + 1}{\tilde{I}}^{(s)}\subseteq \tilde{I}^s.$ Thus $\tilde{I}^{(s)}/\tilde{I}^s$ is an Artinian module. Therefore $\dim (\tilde{I}^{(s)}/\tilde{I}^s)=0$ and hence $H_{\m }^i(\tilde{I}^{(s)}/\tilde{I}^s)=0$ for $i>0.$ Consider the following short exact sequence
	$$0\rightarrow \tilde{I}^{(s)}/\tilde{I}^s \rightarrow R/\tilde{I}^s \rightarrow R/\tilde{I}^{(s)}\rightarrow 0. $$
	Applying local cohomology functor we get $H_{\m}^i(R/\tilde{I}^{(s)})\cong H_{\m}^i(R/\tilde{I}^{s})$ for $i\geq 1$ and the following short exact sequence
	\begin{equation} \label{exact}
	0\rightarrow H_{\m}^0(\tilde{I}^{(s)}/\tilde{I}^s) \rightarrow H_{\m}^0(R/\tilde{I}^s) \rightarrow H_{\m}^0(R/\tilde{I}^{(s)})\rightarrow 0.
	\end{equation}
	
	Now by the exact sequence $(\ref{exact}),$  we have $a_0(R/\tilde{I}^{(s)})\leq a_0(R/\tilde{I}^s).$
	Thus we can conclude that $\reg (R/\tilde{I}^{(s)})=\max\{a_i(R/\tilde{I}^{(s)})+i~|~i\geq 0\}\leq\max\{a_i(R/\tilde{I}^{s})+i~|~i\geq 0\}=\reg(R/\tilde{I}^s). $ Hence $\reg \tilde{I}^{(s)}\leq \reg \tilde{I}^s.$
\end{proof}

\begin{theorem} \label{maximal.4}  
		Let $\tilde{I}$ be the edge ideal of a  weighted oriented cycle $ D $ where the elements  of $ V^{+} $ are sinks and let $ G $ be the underlying graph  of $ D $ which is the cycle $ C_{2n+1} = (x_1, \ldots , x_{2n+1} ).$   Let $ w_i = w(x_i)  $ where $ x_i \in V^+ ,$  $ w_v := \max\{ w_i ~|~x_i \in V^+  \}  $ and $\displaystyle{ w = \sum_{x_i \in V^+  }(w_i - 1)        .      }$ For $ s \in \mathbb{N} ,$ let  $ s = k(n + 1) + r $ for some $ k \in  \mathbb{Z}  $ and
	$ 0 \leq r \leq n. $    Then   $\reg ( \tilde{I}^{(s)} /\tilde{I}^{s}) = (s-n-1)(1+w_v) + 2w + 2n  + 1      $ for $ s \geq n+1. $    
\end{theorem}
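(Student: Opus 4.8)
The plan is to exploit that $M:=\tilde{I}^{(s)}/\tilde{I}^s$ is Artinian. Indeed Lemma \ref{maximal.1} gives $\m^{(k-1)w_v+w+1}M=0$, so $M$ has finite length, whence $M=H^0_{\m}(M)$, all higher local cohomology vanishes, and $\reg M=a_0(M)=\max\{d:M_d\neq 0\}$ is simply the top nonzero degree of $M$, i.e. the largest degree of a monomial in $\tilde{I}^{(s)}\setminus\tilde{I}^s$. Thus the theorem reduces to computing this top degree and checking it equals $D:=(s-n-1)(1+w_v)+2w+2n+1$.

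The organizing observation I would use is that, for a monomial $m=\prod_i x_i^{b_i}$, both conditions $m\in\tilde{I}^{(s)}$ and $m\in\tilde{I}^s$ depend only on the reduced exponents $\beta_i:=\lfloor b_i/w_i\rfloor$ (with $w_i=1$ when $x_i\notin V^+$): membership in $\tilde{I}^{(s)}=\bigcap_C I_C^{s}$ is governed by the cover-sum inequalities $\sum_{x_i\in C}\beta_i\ge s$ of Proposition \ref{sym.cor.1}, while $m\in\tilde{I}^s$ means a product of $s$ edge-generators divides $m$, whose constraint at a sink $x_i\in V^+$ reads $\sum_{e\ni x_i}a_e\le\lfloor b_i/w_i\rfloor=\beta_i$ (a sink never occurs to the first power in a generator). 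Hence for a fixed admissible $\beta$ the degree is maximized by taking $b_i=w_i\beta_i+(w_i-1)$, contributing total surplus $\sum_{x_i\in V^+}(w_i-1)=w$. This yields the reduction $\max\deg M = w + \max\{\sum_i w_i\beta_i : \mu=\prod x_i^{\beta_i}\in I^{(s)}\setminus I^s\}$, where $I$ is the edge ideal of the \emph{unweighted} cycle $C_{2n+1}$ (cf. Theorem \ref{sym.theorem.1}); it then suffices to show the optimum on the right equals $s+n+(s-n-1)w_v+w$.

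For the lower bound I would exhibit an explicit extremal monomial. Exploiting the odd-hole integrality gap, one constructs $\mu\in I^{(s)}\setminus I^s$ that loads the maximal-weight vertex $x_v$ as heavily as the cover inequalities allow while keeping the maximal size of a dividing product of edge-generators equal to $s-1$; concretely one balances the exponents around the cycle so that the covers through $x_v$ are slack and the two covers avoiding $x_v$ are tight, pushing $\beta_v$ to its largest feasible value. Reweighting by $b_i=w_i\beta_i+(w_i-1)$ gives a monomial of degree exactly $D$, whose membership in $\tilde{I}^{(s)}$ follows from Proposition \ref{sym.cor.1} and whose non-membership in $\tilde{I}^s$ follows from the ``$fc^t$'' argument already used in Lemma \ref{maximal.3}. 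For the matching upper bound I would bound the minimal generators of $M$: via the same reduction these are the images of the minimal generators $\mu$ of $I^{(s)}$ not lying in $I^s$, and an odd-cycle LP-duality estimate shows $\sum_i w_i\beta_i\le D-(k-1)w_v-w$ for all such $\mu$. Feeding this into $\m^{(k-1)w_v+w+1}M=0$ (Lemma \ref{maximal.1}) gives $\max\deg M\le \max_{\text{gens}}\deg + (k-1)w_v+w \le D$, closing both inequalities.

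The hard part will be the purely combinatorial core on the odd cycle: forcing the two estimates to meet exactly at $D$. Both the construction of the extremal non-member and the degree bound on minimal generators rest on the odd-hole inequality $\sum_i\beta_i\le\lfloor\tfrac12\sum(\text{edge bounds})\rfloor$ together with careful tracking of how the single heaviest weight $w_v$ interacts with the tight covers; the case distinction forced by the remainder $r$, and the parity choice $b_i=w_i\beta_i+(w_i-1)$ that harvests the surplus $w$, is where the bookkeeping is most delicate. Everything else—the Artinian reduction and the passage through $\Phi$—is formal once these extremal configurations are pinned down.
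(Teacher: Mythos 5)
Your opening moves are sound and match the paper: the Artinian reduction via Lemma \ref{maximal.1} is exactly how the paper's proof begins, and your passage to reduced exponents $\beta_i=\lfloor b_i/w_i\rfloor$ is a correct (and clean) way to transport both membership questions to the unweighted cycle, where the paper instead works directly with the decomposition $\tilde{I}^{(s)}=\tilde{I}^s+\sum_{t=1}^{k}\tilde{I}^{s-t(n+1)}(c)^t$ of Proposition \ref{uni-sym}. The genuine gap is in your upper bound. Your key claim --- that every minimal generator $\mu=\prod_i x_i^{\beta_i}$ of $I^{(s)}$ with $\mu\notin I^s$ satisfies $\sum_i w_i\beta_i\le D-(k-1)w_v-w$, where $D=(s-n-1)(1+w_v)+2w+2n+1$ --- is false. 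Take $s=2(n+1)$ (so $k=2$, $r=0$) and $\mu=(x_vx_{v+1})^{n+1}\,x_1\cdots x_{2n+1}$. Then $\mu\in I^{(s)}$: every minimal vertex cover of $C_{2n+1}$ has at least $n+1$ vertices and meets the edge $\{x_v,x_{v+1}\}$, so its $\beta$-sum is at least $(n+2)+n=s$. Also $\mu\notin I^s$, since $\deg\mu=4n+3<2s$. And $\mu$ is a minimal generator: dividing by $x_v$ (resp.\ $x_{v+1}$, resp.\ any other $x_j$) leaves a monomial whose $\beta$-sum on a suitable minimum cover containing exactly one of $x_v,x_{v+1}$ (and containing $x_j$ in the last case) equals $2n+1<s$. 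Its image $\Phi(\mu)=(x_v^{w_v}x_{v+1})^{n+1}c$ is then a minimal generator of $\tilde{I}^{(s)}$ outside $\tilde{I}^s$ of degree $(n+1)(w_v+1)+2n+1+w=D-w$, exceeding your claimed bound by $(k-1)w_v=w_v\ge 2$.

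The failure is structural, not a matter of adjusting constants. With the correct generator bound $D-w$, your inequality $\max\deg M\le \max_{\text{gens}}\deg+(k-1)w_v+w$ only yields $\max\deg M\le D+(k-1)w_v$, which overshoots the target for every $k\ge 2$: you are pairing the single global annihilator exponent $(k-1)w_v+w+1$ (which is forced by the $t=k$ summand) with generators coming from the $t=1$ summand, and these two worst cases never occur simultaneously. The paper's proof is organized precisely to avoid this mismatch: for each $t$ separately it bounds the top degree of elements of $\tilde{I}^{s-t(n+1)}(c)^t$ not lying in $\tilde{I}^s$ by $d(t)=(s-t(n+1))(w_v+1)+(t-1)w_v+w+t(2n+1+w)$, using the containment $\m^{(t-1)w_v+w+1}(c)^t\subseteq\tilde{I}^{t(n+1)}$ extracted from the proof of Lemma \ref{maximal.1} together with the non-containment of Lemma \ref{maximal.3} for sharpness, and then shows $d(1)\ge d(2)\ge\cdots\ge d(k)$ via the estimate $n(w_v-1)\ge w$. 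To repair your argument you would need this same per-summand pairing (equivalently, a stratification of the generators of $M$ according to which power of $c$ they involve, with an annihilator exponent depending on the stratum); a single global estimate of the form ``max generator degree plus annihilator exponent'' cannot close the gap.
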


\begin{proof}
We fix $s \geq  n+1.$		Let   $ c = (\prod x_j| x_j \notin V^{+})( \prod {x_j^{w_j}}| x_j \in V^{+})   $ and   $ \m = (x_1, \ldots , x_{2n+1}) $ be the maximal homogeneous ideal. Let $ x_v $ be an  element of $ V^{+} $  with maximum weight.   
	By Lemma \ref{maximal.1}, we have $\m^{(k-1)w_v + w + 1}\tilde{I}^{(s)}\subseteq \tilde{I}^s.$ Thus $\tilde{I}^{(s)}/\tilde{I}^s$ is an Artinian module. Therefore $\dim (\tilde{I}^{(s)}/\tilde{I}^s)=0,$     $H_{\m }^0(\tilde{I}^{(s)}/\tilde{I}^s)=\tilde{I}^{(s)}/\tilde{I}^s$ and  $H_{\m }^i(\tilde{I}^{(s)}/\tilde{I}^s)=0$ for $i>0.$ So $\reg (\tilde{I}^{(s)}/\tilde{I}^s) =   a_0(\tilde{I}^{(s)} /\tilde{I}^{s}).$ We want to prove that one of the	maximum degree  element of $I^{(s)}$ which  does not lie in   $ \tilde{I}^{s} $ will   come from the ideal      $ \tilde{I}^{s-(n+1)}(c)^1 $.
\vspace*{0.3cm}\\
 By Proposition \ref{uni-sym}, we have $ \tilde{I}^{(s)} =  \tilde{I}^s + \displaystyle{ \sum_{t=1}^{k} \tilde{I}^{s-t(n+1)}  (c)^t  .}$ Here $\deg(c)= 2n + 1 + w  $ and $\deg(c^t)= t(2n + 1 + w)  .$ Without loss of generality we can assume that $x_v\neq x_{2n+1}$. Then $x_v^{w_v}x_{v+1}$ is a maximum degree minimal generator of $\tilde{I}$ which implies that one maximum degree minimal generator of $\tilde{I}^{s-t(n+1)}$ is $(x_v^{w_v}x_{v+1})^{s-t(n+1)}$ and its degree is $(s-t(n+1))(w_v+1).$    
  From the proof of Lemma \ref{maximal.1}, we observe that
	$\m^{(t-1)w_v + w + 1 } (c)^t \subseteq   \tilde{I}^{t(n+1)}   $ for   $ 1 \leq t \leq k  .$
	Thus by      Lemma \ref{maximal.3}, the maximum value of $ u $ such that  $\m^u  (c)^t \nsubseteq \tilde{I}^{t(n+1)}  $ is $ (t-1)w_v + w  $   for   $ 1 \leq t \leq k. $   So the maximum degree of an element  of  the ideal  $ (c)^t $ which  does not lie in $ \tilde{I}^{t(n+1)} $ is $  (t-1)w_v + w + t(2n + 1 + w)         $  for   $ 1 \leq t \leq k  .$ Hence the             maximum degree of an element of   
$ \tilde{I}^{s-t(n+1)} (c)^t  $ which  does not lie   in $\tilde{I}^{s-t(n+1)} \tilde{I}^{t(n+1)} = \tilde{I}^{s} $ is $  (s-t(n+1))(w_v+1)    +  (t-1)w_v + w + t(2n+1+w)         $  for   $ 1 \leq t \leq k. $ Let $ d(t) = (s-t(n+1))(w_v+1)    +  (t-1)w_v + w + t(2n+1+w)        $ for   $ 1 \leq t \leq k. $
If $ t_1 < t_2 $, then
\begin{align*}
 d(t_1) - d(t_2) &= (s-t_1(n+1))(w_v+1)    +  (t_1-1)w_v + w + t_1(2n+1+w) \\
&\hspace*{0.3cm} -   (s-t_2(n+1))(w_v+1)   -      (t_2-1)w_v     - w - t_2(2n+1+w)\\
&=       (t_2 - t_1)         (n+1)(w_v+1)    -  (t_2-t_1) w_v  - (t_2 - t_1 )(2n + 1 + w)\\
 &= (t_2-t_1)( (n+1)(w_v + 1) - w_v - (2n+1+w)   )\\
 &  = (t_2-t_1)( n(w_v + 1)  - (2n+w)   )\\
 &=   (t_2-t_1)( n(w_v-1)   + 2n - 2n - w  )\\
 &=   (t_2-t_1)( n(w_v-1) - w  )  .                
\end{align*}
Since $|E(D)|=2n+1,$ there are at most $ n $     elements of $ V^+. $ So $ n(w_v - 1) \geq \displaystyle{  \sum_{x_i \in V^+  }(w_i - 1)      } = w $ which implies that $ d(t_1) - d(t_2) \geq 0.  $ Thus $ d(1)   \geq d(2)\geq \cdots \geq d(k)  .$         
Therefore   
one of  the maximum degree  elements   of  $ \tilde{I}^{(s)} $   which  does not lie in   $ \tilde{I}^{s} $ will   come from     $ \tilde{I}^{s-(n+1)}(c)^1 $ for $ t=1 $ and its degree is $d(1) = (s-n-1)  (w_v+1) + w + (2n+1+w) = (s-n-1)  (w_v+1) + 2w + 2n+1.  $  Hence $ \reg(\tilde{I}^{(s)} /\tilde{I}^{s}) =   a_0(\tilde{I}^{(s)} /\tilde{I}^{s}) = (s-n-1)  (w_v+1) + 2w + 2n+1   $   for $s \geq n+1.$              	
\end{proof}

\begin{corollary}\label{maximal.5}
			Let $\tilde{I}$ be the edge ideal of a  weighted oriented cycle $ D $ where the elements of $ V^{+} $ are sinks and let $ G $ be the underlying graph  of $ D $ which is the cycle $ C_{2n+1} = (x_1, \ldots , x_{2n+1} ).$   Let  $ V^+ = \{x_v\}  
			$ and $ w_v = w(x_v) $ for some $ x_v \in V(C_{2n+1}) .$   For $ s \in \mathbb{N} ,$ let  $ s = k(n + 1) + r $ for some $ k \in  \mathbb{Z}  $ and
	$ 0 \leq r \leq n. $      
 Then   $   \reg\tilde{I}^{(s)} =  \reg\tilde{I}^{s}  $ for all $ s. $   
	
\end{corollary}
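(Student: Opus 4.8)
The plan is to combine the inequality already available from Theorem \ref{maximal.2} with a reverse inequality extracted from the short exact sequence relating $\tilde{I}^s$ and $\tilde{I}^{(s)}$. For $1\le s\le n$ there is nothing to prove, since $\tilde{I}^{(s)}=\tilde{I}^s$ by Proposition \ref{uni-sym}(1); so I would fix $s\ge n+1$ and set $w=w_v-1$ (as $V^+=\{x_v\}$). Theorem \ref{maximal.2} gives $\reg\tilde{I}^{(s)}\le\reg\tilde{I}^s$, so it remains to prove $\reg\tilde{I}^s\le\reg\tilde{I}^{(s)}$, equivalently $\reg(R/\tilde{I}^s)\le\reg(R/\tilde{I}^{(s)})$.

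First I would feed the short exact sequence
$$0\to \tilde{I}^{(s)}/\tilde{I}^s\to R/\tilde{I}^s\to R/\tilde{I}^{(s)}\to 0$$
into the standard regularity estimate for a short exact sequence, obtaining
$$\reg(R/\tilde{I}^s)\le\max\{\reg(\tilde{I}^{(s)}/\tilde{I}^s),\ \reg(R/\tilde{I}^{(s)})\}.$$
By Lemma \ref{maximal.1} the module $\tilde{I}^{(s)}/\tilde{I}^s$ is Artinian, so its regularity equals $a_0(\tilde{I}^{(s)}/\tilde{I}^s)$, which is exactly what Theorem \ref{maximal.4} computes; writing $\rho_s:=\reg(\tilde{I}^{(s)}/\tilde{I}^s)=(s-n-1)(w_v+1)+2w+2n+1$, the whole corollary reduces to the single inequality $\rho_s\le\reg(R/\tilde{I}^{(s)})$, since that forces the maximum above to equal $\reg(R/\tilde{I}^{(s)})$ and hence $\reg(R/\tilde{I}^s)\le\reg(R/\tilde{I}^{(s)})$.

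To obtain this lower bound I would exhibit an explicit high-degree minimal generator of $\tilde{I}^{(s)}$. Since $x_v$ is a sink of weight $w_v$, the monomial $x_v^{w_v}x_{v+1}$ is a maximal-degree generator of $\tilde{I}$, so $x_v^{sw_v}x_{v+1}^s=(x_v^{w_v}x_{v+1})^s\in\tilde{I}^s\subseteq\tilde{I}^{(s)}$ has degree $s(w_v+1)$. Using the description $\tilde{I}^{(s)}=\tilde{I}^s+\sum_{t\ge1}\tilde{I}^{s-t(n+1)}(c)^t$ from Proposition \ref{uni-sym}(2), the only generators that could divide $x_v^{sw_v}x_{v+1}^s$ are those supported on $\{x_v,x_{v+1}\}$: every generator carrying a factor of $c$ involves all $2n+1$ cycle variables, and inside $\tilde{I}^s$ the only edge dividing it is $x_v^{w_v}x_{v+1}$ itself. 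Hence $x_v^{sw_v}x_{v+1}^s$ is a minimal generator of $\tilde{I}^{(s)}$, giving $\reg(R/\tilde{I}^{(s)})\ge s(w_v+1)-1$. A short computation with $w=w_v-1$ yields $\big(s(w_v+1)-1\big)-\rho_s=(n-1)(w_v-1)\ge0$, so $\reg(R/\tilde{I}^{(s)})\ge\rho_s$, which completes the reverse inequality.

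The step I expect to be delicate is the minimality claim: one must verify that no lower-degree generator of $\tilde{I}^{(s)}$ — in particular none of the new symbolic generators carried by powers of $c$ — divides $x_v^{sw_v}x_{v+1}^s$. This is precisely where the hypothesis $V^+=\{x_v\}$ is used (so that $x_{v+1}$ is unweighted and $c$ necessarily spreads over the whole cycle), and it is also what makes the slack $(n-1)(w_v-1)$ nonnegative; with several weighted vertices the degree bookkeeping no longer closes, and one recovers only the inequality of Theorem \ref{maximal.2}.
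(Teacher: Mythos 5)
Your proof is correct and takes essentially the same route as the paper: both handle $1\le s\le n$ via Proposition \ref{uni-sym}, and for $s\ge n+1$ both combine Theorem \ref{maximal.2}, the short exact sequence estimate $\reg(R/\tilde{I}^s)\le\max\{\reg(\tilde{I}^{(s)}/\tilde{I}^s),\reg(R/\tilde{I}^{(s)})\}$, Theorem \ref{maximal.4}, and the degree-$s(w_v+1)$ minimal generator $(x_v^{w_v}x_{v+1})^s$ of $\tilde{I}^{(s)}$, with the same arithmetic slack $(n-1)(w_v-1)\ge 0$. The only cosmetic difference is that the paper certifies minimality of that generator by pulling back the minimal generator $(x_vx_{v+1})^s$ of $I^{(s)}$ through the map $\Phi$ of Theorem \ref{sym.theorem.1}, whereas you argue it directly from the support of $c$; both justifications are valid.
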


\begin{proof}
  Let $ I $ be the edge ideal of $G $  and $ \Phi $ is same as defined in  Notation \ref{phi}.	
First we fix $s \geq n+1.$	Consider the following short exact sequence
	\begin{equation} \label{exact.1}
	0\rightarrow \tilde{I}^{(s)}/\tilde{I}^s \rightarrow R/\tilde{I}^s \rightarrow R/\tilde{I}^{(s)}\rightarrow 0. 
	\end{equation}
Without loss of generality we can assume that $ V^+ =  \{x_1\}
.$ Thus by Theorem \ref{maximal.4}, we have    $\reg(\tilde{I}^{(s)}/\tilde{I}^{s}) = (s-n-1) (w_1 + 1) + 2(w_1 - 1) + 2n + 1     .$ Here $x_2x_1 \in I$ and by definition of $I^{(s)},$ we have $(x_2x_1)^{s}\in {I}^{(s)}.$ Observe that $(x_2x_1 )^{s} $ is a minimal generator of  $ {I}^{(s)}  .$   Then by   Theorem \ref{sym.theorem.1},  $\Phi((x_2x_1)^{s}) =(x_2x_1^{w_1} )^{s} \in \tilde{I}^{(s)}$ and $(x_2x_1^{w_1} )^{s} $ is a minimal generator of degree $ s(w_1 + 1)  $ in  $ \tilde{I}^{(s)}  .$    
Since there exist a minimal generator of degree $ s(w_1 + 1) $   in $ \tilde{I}^{(s)}  ,$  we have
\begin{align*}
\reg (\tilde{I}^{(s)}) &\geq
	   s(w_1 + 1)\\
	   & =   (s-n-1) (w_1+1) + (n+1) (w_1-1 + 2) \\
	   & =  (s-n-1) (w_1 + 1) +     (n+1)    (w_1-1) + 2(n+1) \\
	   &>  \reg(\tilde{I}^{(s)}/\tilde{I}^{s})  \\
\end{align*}
  as $ n \geq  1 .$     So   $ \reg (R/\tilde{I}^{(s)}) \geq   \reg(\tilde{I}^{(s)}/\tilde{I}^{s}) .$  
	Thus by the exact sequence (\ref{exact.1}) and \cite[Lemma 1.2]{H.T-2016}, we have   $ \reg (R/\tilde{I}^{s})  \leq   \reg (R/\tilde{I}^{(s)}).    $ 
	Also by Theorem \ref{maximal.2}, we have  $ \reg (R/\tilde{I}^{(s)})  \leq   \reg (R/\tilde{I}^{s}).    $ Therefore    $ \reg (R/\tilde{I}^{(s)})  =     \reg (R/\tilde{I}^{s}) ,$ i.e.,    $   \reg\tilde{I}^{(s)} =  \reg\tilde{I}^{s}  $ for $s \geq n+1.$
	By Proposition \ref{uni-sym},  we have $   \reg\tilde{I}^{(s)} =  \reg\tilde{I}^{s}  $ for  $1 \leq s \leq n.$ Hence   $   \reg\tilde{I}^{(s)} =  \reg\tilde{I}^{s}  $ for all $ s. $
	   
\end{proof}    

\textbf{Acknowledgements}
\vspace*{0.15cm}\\
The first author was supported by SERB (grant No.: EMR/2016/006997), India.
The authors are grateful to the referee for his helpful comments and suggestions.

\end{document}